\begin{document}

\newtheorem{theorem}{Theorem}[section]
\newtheorem{lemma}[theorem]{Lemma}
\newtheorem{proposition}[theorem]{Proposition}

\newtheorem{statement}[theorem]{Statement}
\newtheorem{conjecture}[theorem]{Conjecture}
\newtheorem{problem}[theorem]{Problem}
\newtheorem{corollary}[theorem]{Corollary}

\newtheorem*{maintheorem1}{Theorem A}
\newtheorem*{maintheorem2}{Theorem B}

\newtheoremstyle{neosn}{0.5\topsep}{0.5\topsep}{\rm}{}{\bf}{.}{ }{\thmname{#1}\thmnumber{ #2}\thmnote{ {\mdseries#3}}}
\theoremstyle{neosn}
\newtheorem{remark}[theorem]{Remark}
\newtheorem{definition}[theorem]{Definition}
\newtheorem{example}[theorem]{Example}

\numberwithin{equation}{section}

\newcommand{\Aut}{\,\mathrm{Aut}\,}
\newcommand{\Inn}{\,\mathrm{Inn}\,}
\newcommand{\End}{\,\mathrm{End}\,}
\newcommand{\Out}{\,\mathrm{Out}\,}
\newcommand{\Hom}{\,\mathrm{Hom}\,}
\newcommand{\ad}{\,\mathrm{ad}\,}
\newcommand{\SLn}{$\rm sl(n)$}
\newcommand{\calL}{{\mathcal L}}
\renewenvironment{proof}{\noindent \textbf{Proof.}}{$\blacksquare$}
\newcommand{\GL}{\,\mathrm{GL}\,}
\newcommand{\SL}{\,\mathrm{SL}\,}
\newcommand{\PGL}{\,\mathrm{PGL}\,}
\newcommand{\PSL}{\,\mathrm{PSL}\,}
\newcommand{\PSO}{\,\mathrm{PSO}\,}
\newcommand{\SO}{\,\mathrm{SO}\,}
\newcommand{\Sp}{\,\mathrm{Sp}\,}
\newcommand{\PSp}{\,\mathrm{PSp}\,}
\newcommand{\UT}{\,\mathrm{UT}\,}
\newcommand{\Exp}{\,\mathrm{Exp}\,}
\newcommand{\Rad}{\,\mathrm{Rad}\,}
\newcommand{\charr}{\mathrm{char}\,}
\newcommand{\tr}{\mathrm{tr}\,}
\newcommand{\diag}{{\rm diag}}

\font\cyr=wncyr10 scaled \magstep1%

\def\Sha{\text{\cyr Sh}}

\title{Isotypical equivalence of periodic Abelian groups}

\keywords{Types, isotypical equivalence, elementary equivalence, Abelian $p$-groups} 
\subjclass[2020]{03C52, 20K10}

\author{Elena Bunina}
\date{}
\address{Department of Mathematics, Bar--Ilan University, 5290002 Ramat Gan, ISRAEL}
\email{helenbunina@gmail.com}

\begin{abstract}
	In this paper we give invariants that characterize isotypically equivalent Abelian periodic groups. Also, we describe types of standart tuples of elements in these groups.
	As the particular case we prove that two  Abelian $p$-groups with separable reduced parts are isotypically equivalent if and only if their divisible parts and their basic subgroups are elementarily equivalent. Also as a corollary we prove that any Abelian $p$-group with a separable reduced part is $\omega$-strongly homogeneous.
\end{abstract}

\maketitle

\section{Introduction}\leavevmode

In this paper we study  periodic Abelian groups~$A$. Our goal is to figure out how close are such groups in  case they have the same sets of types realized in~$A$. 
We show that these groups have the same types (are \emph{isotypically equivalent}) if and only if their special numerical invariants coincide. Also, we describe types of 
tuples of standart (in some sense) elements of periodic Abelian groups.

In the particular case we prove that two  Abelian $p$-groups with separable reduced parts are isotypically equivalent if and only if their divisible parts and their basic subgroups are elementarily equivalent. Also as a corollary we prove that any Abelian $p$-group with a separable reduced part is $\omega$-strongly homogeneous.

In this paper (Abelian) groups are our main subject, so we do not consider here rings,
semigroups, etc., even thought most of definitions below make sense for arbitrary algebraic structures.

\begin{definition}
Let $G$ be a group and $(g_1, \dots , g_n)$ a tuple of its elements. The {\bf type} of this
tuple in~$G$, denoted $\mathrm{tp}^G(g_1, \dots , g_n)$, is the set of all first order formulas in free
variables $x_1, \dots , x_n$ in the standard group theory language  which are
true on $(g_1, \dots , g_n)$ in~$G$ (see~\cite{Mya2} or \cite{Mya3} for details). 
\end{definition}

\begin{definition}
The set of all types of tuples
of elements of~$G$ is denoted by $\mathrm{tp}(G)$. Following~\cite{Mya7}, we say that two groups
$G$ and $H$ are {\bf isotypic} if $\mathrm{tp}(G) = \mathrm{tp}(H)$, i.\,e., if any type realized in~$G$ is realized in~$H$, and vice versa.
\end{definition}

 Isotypic groups appear naturally in
logical (algebraic) geometry over groups which was developed in the works of
B.\,I.\,Plotkin and his co-authors (see~\cite{Mya5,Mya6,Mya7} for details), they play an important
role in this subject. In particular, it turns out that two groups are logically equivalent if and only if they have the same sets of realizable types. So there arise  two fundamental algebraic
questions which are interesting in its own right: what are possible types of
elements in a given group~$G$ and how much of the algebraic structure of~$G$ is
determined by the types of its elements?

Isotypicity property of groups is related to the elementary equivalence property,
though it is stronger. Indeed, two isotypic groups are elementarily equivalent,
but the converse does not hold. For example, if we denote by $F_n$ a free
group of finite rank~$n$, then groups $F_n$ and $F_m$ for $2\leqslant m < n$ are elementarily
equivalent~\cite{Mya22,Mya23}, but not isotypic~\cite{Myas}. Furthermore, Theorem~1 from~\cite{Myas}
 shows that two finitely generated isotypic nilpotent groups are isomorphic,
but there are examples, due to Zilber, of two elementarily equivalent non-isomorphic
finitely generated nilpotent of class~$2$ groups~\cite{Mya24}.

Isotypicity is a very strong relation on groups, which quite often implies
their isomorphism. This explains the need of the following definition. 

\begin{definition}
We say
that a group $G$ is \emph{defined by its types} if every group isotypic to~$G$ is isomorphic
to~$G$. 
\end{definition}

It was noticed in~\cite{Myas} that every finitely generated group $G$ which
is defined by its types satisfies a (formally) stronger property. 
Namely, we say
that 

\begin{definition}
A finitely generated group $G$ is \emph{strongly defined by types} if for any isotypic
to~$G$ group~$H$ every elementary embedding $G \to H$ is an isomorphism.
\end{definition}

Miasnikov and Romanovsky in the paper~\cite{Myas}  proved that 

1) every virtually polycyclic group is strongly defined by its types;

2) every finitely generated metabelian group is strongly defined by
its types;

3) every finitely generated rigid group is strongly defined by its types.
In particular, every free solvable group of finite rank is strongly defined by its
types.

\smallskip

R.\,Sclinos (unpublished) proved that finitely generated homogeneous groups are defined by types. Moreover, finitely generated co-hopfian and finitely presented hopfian groups are defined by types. Nevertheless, the main problem in the area remains widely open:

\begin{problem}[\cite{Mya6}] 
Is it true that every finitely generated group is defined by types?
\end{problem}

In the recent paper~\cite{Gvozd_new} Gvozdevsky proved that any field of finite transcendence degree over a prime subfield is defined by types.
Also he gave  several interesting  examples of certain countable isotypic but not isomorphic
structures: totally ordered sets, rings, and groups.



For arbitrary Abelian $p$-groups  it is possible to introduce invariants (like it was done in~\cite{a1} for elementary equivalence), which completely define isotypicity of any Abelian $p$-groups (and therefore of all periodic Abelian groups). We will give these invariants later and after that will formulate the main theorem describing, when two Abelian periodic groups are isotypically equivalent.

Since elementary equivalence is necessary for isotypicity, we will start with the results of elementary equivalence of Abelian groups.

\section{Elementary equivalence of Abelian groups}\leavevmode

\begin{definition}
Two groups are called \emph{elementarily equivalent} if their first order theories coincide.
\end{definition}

Elementary equivalent Abelian groups were completely described in 1955 by Wanda Szmielew in~\cite{a1} (see also Eclof and Fisher,~\cite{a2}).

To formulate her theorem we need to introduce a set of special invariants of Abelian groups.

Let $A$ be an Abelian group, $p$ a prime number, $A[p]$ be the subgroup of~$A$, containing all elements of~$A$ of the orders $p$ or~$1$ (it is so-called $p$-\emph{socle} of the group~$A$),
$kA$ be the subgroup of~$A$, containing all elements of the form $ka$, $a\in A$.

\medskip

{\bf The first invariant} is
$$
D(p;A):= \lim\limits_{n\to \infty} \dim ((p^nA)[p])\text{ for every prime }p.
$$

Note that for every $n\in \mathbb N$ the subgroup $(p^nA)[p]$ consists of elements which are annulated being multiplied by~$p$, therefore it is a vector space over the field~$\mathbb Z_p$. So $\dim (p^nA)[p]$ is for every $n\in \mathbb N$ a well-defined cardinal number.

Since $p^{n+1}A\subset p^n A$, then $(p^{n+1}A)[p]\subset (p^n A)[p]$, therefore
$$
\dim (p^{n+1}A)[p]\leqslant \dim (p^nA)[p] \text{ for all } n\in \mathbb N.
$$
Consequently we have a non-increasing sequence of cardinal numbers, which has the smallest element. 

Thus for any Abelian group $A$ and any prime $p$ the invariant  $D(p; A)$ is well-defined.

\medskip

{\bf The second invariant} is
$$
Tf (p;A):= \lim\limits_{n\to \infty} \dim (p^nA/p^{n+1}A).
$$

This  invariant  is also well-defined for any prime~$p$ and any Abelian group~$A$.

\medskip

{\bf The third invariant } is
$$
U(p,n-1;A):= \dim ((p^{n-1}A)[p] / (p^nA)[p]),
$$
which is called the  \emph{Ulm} invariant, it defines (in some sense)  the number of copies of $\mathbb Z_{p^n}$ in~$A$.

\medskip

{\bf The last invariant} is $\Exp(A)$ which is an \emph{exponent} of~$A$ (the smallest natural number  $n$ such that $\forall a\in A\, na=0$).

\begin{theorem}[Szmielew theorem on elementary classification of Abelian groups,~\cite{a1}]\label{EE-Abelian}
Two Abelian groups  $A_1$ and $A_2$ are elementarily equivalent if and only if their elementary invariants  $D(p;\cdot)$, $Tf (p;\cdot)$, $U(p,n-1;\cdot)$ and $\Exp(\cdot)$
pairwise coincide for all natural $n$ and prime~$p$ \emph{(}more precisely, they are either finite and coincide or simultaneously are equal to infinity\emph{)}.
\end{theorem}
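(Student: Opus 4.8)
The plan is to prove both directions via the standard Szmielew machinery, adapting the classical invariant-based arguments to the Abelian setting. For the forward direction, suppose $A_1$ and $A_2$ are elementarily equivalent; I would exhibit, for each invariant, a first-order formula (or a uniformly definable family of formulas, parametrized by $n$) whose satisfaction count in $A_i$ computes the value of that invariant. Concretely, $\dim((p^nA)[p])$ is the $\mathbb{Z}_p$-dimension of a definable subgroup, so ``$\dim((p^nA)[p]) \geqslant k$'' is first-order expressible for each fixed finite $k$ (assert the existence of $k$ elements of $p^nA$ killed by $p$ that are $\mathbb{Z}_p$-linearly independent, i.e. no nontrivial $\mathbb{Z}_p$-combination vanishes); similarly for $\dim(p^nA/p^{n+1}A)$ and for the Ulm quotient $\dim((p^{n-1}A)[p]/(p^nA)[p])$. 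Taking $n \to \infty$ (respectively taking differences) then shows that $D(p;\cdot)$, $Tf(p;\cdot)$, $U(p,n-1;\cdot)$ agree up to the finite/infinite dichotomy, since a sentence ``there are at least $k$ such elements'' holds in $A_1$ iff it holds in $A_2$. Finiteness of $\Exp(A)$ and its exact value when finite are each expressible by a single sentence ($\forall x\, (nx = 0)$), so $\Exp(A_1) = \Exp(A_2)$ as well (with the convention that infinite exponent means all these sentences fail).

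For the converse — the substantive direction — I would use an Ehrenfeucht–Fra\"iss\'e / back-and-forth argument, or equivalently the elimination of quantifiers for Abelian groups down to the Szmielew invariant sentences. The cleanest route is: first reduce to the $p$-local picture by the structure theory (an Abelian group is, up to the relevant invariants, built from its $p$-primary pieces, a divisible torsion-free part, and a bounded part), then observe that each such local piece is, as far as first-order theory goes, determined by $D(p;\cdot)$, the sequence $U(p,n-1;\cdot)$, $Tf(p;\cdot)$ and $\Exp$. One shows that if all these coincide, one can play the $m$-round EF game: at each stage, given a partial isomorphism between finitely generated ``Szmielew-like'' subconfigurations, the matching invariant values guarantee enough room (enough independent socle elements at each height, enough elements in each layer $p^nA/p^{n+1}A$, matching divisible parts) to extend the play by one more element on either side. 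The bookkeeping is the classical Szmielew argument; I would cite \cite{a1,a2} for the details of the quantifier elimination and focus the write-up on assembling the invariants correctly.

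The main obstacle is the converse direction's uniformity in $n$ and $p$: a single first-order sentence can only mention finitely many primes and bounded heights, so one must argue that agreement of \emph{all} the invariants (for all $n$, all $p$) suffices to decide \emph{every} sentence, each of which has bounded quantifier rank and hence only ``sees'' boundedly many primes and a bounded window of heights. The care needed is in the limit invariants $D(p;A)$ and $Tf(p;A)$: because these are defined as stabilized values of non-increasing sequences of cardinals, one must check that the stabilization height relevant to a given sentence is controlled by its quantifier rank, so that ``$D(p;A_1) = D(p;A_2)$ up to finite/infinite'' together with agreement of the finitely many relevant Ulm invariants pins down the truth value. I would handle this by the standard observation that a formula of quantifier rank $m$ cannot distinguish $\dim \geqslant k$ from $\dim \geqslant k'$ once $k, k' \geqslant m$, so the finite/infinite agreement is exactly the right amount of information — this is where invoking the established Szmielew quantifier-elimination (rather than re-deriving it) does the real work.
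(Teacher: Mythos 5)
This theorem is quoted in the paper as a classical result with a citation to Szmielew (and Eklof--Fischer); the paper supplies no proof of its own, so there is nothing internal to compare your argument against. Your sketch is the standard one and is correct in outline: the forward direction via first-order expressibility of ``$\dim \geqslant k$'' for each definable quotient and socle, and the converse via quantifier elimination / Ehrenfeucht--Fra\"iss\'e bookkeeping, with the substantive uniformity issues (how a sentence of bounded quantifier rank only sees finitely many primes and a bounded window of heights, and why the finite/infinite truncation of the limit invariants $D(p;\cdot)$ and $Tf(p;\cdot)$ is exactly the right amount of information) correctly identified and deferred to \cite{a1,a2}. As a self-contained proof it is incomplete precisely at that deferred step, but as a reconstruction of the cited classical argument it is faithful and contains no errors.
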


\section{Periodic Abelian groups and their structure}\leavevmode

In this section we will formulate the most important (and useful for our main results) definitions and theorems about periodic Abelian groups. We will mostly use the book~\cite{Fuks}.

\medskip

Any periodic Abelian group is a direct sum of its $p$-components for different prime~$p$:
$$
A=\bigoplus_{p\text{ is prime}}A_p.
$$

Therefore we are interested in the structure and properties of Abelian $p$-groups: groups, where all elements have order $p^n$, $n\in \mathbb N\cup \{ 0\}$.

\medskip 

Let $p$ be some prime number, $A_p$ be an Abelian $p$-group.

It is said that an~element~$a\in A$ is \emph{divisible by}
a~positive integer~$n$ (denoted as $n\mid a$)
if there is an element $x\in A$ such that $nx=a$. A~group~$D$ is called \emph{divisible} if $n\mid a$ for all $a\in D$ and all natural~$n$.
The groups $\mathbb Q$ and $\mathbb Z(p^\infty)$ are examples of divisible groups. Any divisible Abelian group is a direct sum of the groups $\mathbb Q$ and $\mathbb Z(p^\infty)$ for different prome~$p$.  Any divisible subgroup of an Abelian group is a direct summand of this  group. A~group~$A$ is called \emph{reduced} if it has no nonzero divisible subgroups.

For every $A_p$ we have 
$$
A_p=A_{p,d}\oplus A_{p,r}=\bigoplus_{\varkappa_p} \mathbb Z(p^\infty)\oplus A_{p,r},
$$
where $A_{p,d}$ is the divisible part of~$A_p$, and $A_{p,r}$ is a reduced $p$-group. Therefore for any periodic Abelian~$A$ we have  $A=A_d\oplus A_r$, where 
$$
A_d\cong \bigoplus_{p\text{ is prime}} \left( \bigoplus_{\varkappa_p} \mathbb Z(p^\infty)\right),
$$
$A_r$ is a reduced periodic group. 

\medskip

A~subgroup $G$ of a~group~$A$ is called \emph{pure} if the equation $nx=g\in G$ is
solvable in~$G$ whenever it is solvable in the entire group~$A$.
In other words, $G$~is pure if and only if
$$
\forall n\in \mathbb Z\quad nG=G\cap nA.
$$

A~subgroup~$B$ of a~group~$A$ is called
a~$p$-\emph{basic subgroup} if it satisfies the following conditions:
\begin{enumerate}
\item
$B$ is a~direct sum of cyclic $p$-groups and infinite
cyclic groups;
\item
$B$ is pure in~$A$;
\item
$A/B$ is $p$-divisible.
\end{enumerate}

Every group, for every prime~$p$, contains $p$-basic
subgroups~\cite{Fuks}.

For $p$-groups $p$-basic subgroups are particularly
important. If $A$~is a~$p$-group and $q$ is a~prime different from~$p$,
then evidently $A$ has only one $q$-basic subgroup, namely~$0$.
Therefore, in $p$-groups we may refer to the $p$-basic subgroups
simply as \emph{basic} subgroups, without confusion.

We need the following facts about basic subgroups.

\begin{theorem}[\cite{Sele7}]\label{BasicMaxBounded}
Assume that $B$ is a~subgroup of a~$p$-group~$A$,
$B=\bigoplus\limits_{n=1}^\infty B_n$, and $B_n$ is a~direct
sum of groups $\mathbb Z(p^n)$.
Then $B$~is a~basic subgroup of~$A$ if and only if
for every integer $n> 0$, the subgroup $B_1\oplus \dots\oplus B_n$
is a~maximal $p^n$-bounded direct summand of~$A$.
\end{theorem}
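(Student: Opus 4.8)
The plan is to prove Theorem~\ref{BasicMaxBounded} by establishing the two implications separately, working with the standard characterization of basic subgroups as pure subgroups that are direct sums of cyclics with divisible quotient. First I would recall the key bridging fact: a subgroup $C$ of a $p$-group $A$ is a direct summand of $A$ with bounded complement-free part precisely when $C$ is pure and bounded; indeed a pure bounded subgroup of any Abelian group is always a direct summand (this is Kulikov's theorem, available from~\cite{Fuks}). So ``maximal $p^n$-bounded direct summand'' can be reinterpreted as ``maximal pure $p^n$-bounded subgroup,'' and the whole statement becomes a comparison of purity/divisibility conditions level by level.

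For the forward direction, assume $B=\bigoplus_{n=1}^\infty B_n$ is basic in $A$. Fix $n$ and set $C_n := B_1\oplus\dots\oplus B_n$; this is $p^n$-bounded and, being a direct summand of the pure subgroup $B$, is itself pure in $A$, hence a direct summand of $A$. To see it is \emph{maximal} such, I would argue that if $C_n \subseteq D \subseteq A$ with $D$ a $p^n$-bounded direct summand, then using $A = B + p^n A$ (which follows from $A/B$ being $p$-divisible, so in particular $p^n$-divisible: every element of $A$ is congruent mod $B$ to something in $p^nA$) one shows $D \subseteq C_n + p^nA$; combined with $p^n D = 0$ and purity of $B$, an element of $D$ decomposes as a piece of $C_n$ plus a piece that is forced to be $0$ because it lies in $(p^nA)\cap D$ and... here one must chase the bounded exponent. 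The cleanest route is: $D \cap B$ is pure in $B$ and $p^n$-bounded, hence contained in $C_n$ (since $\bigoplus_{m>n}B_m$ has no nonzero $p^n$-bounded pure... actually in $B$ a $p^n$-bounded summand sits inside $C_n$ up to automorphism), while $D/(D\cap B) \hookrightarrow A/B$ is both a bounded group and $p$-divisible (image of a summand), forcing $D = D\cap B \subseteq C_n$, so $D = C_n$.

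For the converse, suppose each $C_n := B_1\oplus\dots\oplus B_n$ is a maximal $p^n$-bounded direct summand of $A$. I need purity of $B$ in $A$ and $p$-divisibility of $A/B$. Purity: given $p^k x = b \in B$ solvable in $A$, write $b$ using only finitely many $B_m$, say $b \in C_N$ with $N$ large; since $C_N$ is a direct summand it is pure in $A$, so $p^k y = b$ has a solution $y \in C_N \subseteq B$, giving purity. Divisibility of $A/B$: it suffices to show $A = B + pA$, equivalently that for each $a\in A$ and each $n$ there is $b\in B$ with $a - b \in p^nA$; if not, one could enlarge $C_n$ by adjoining (a suitable bounded adjustment of) $a$ to get a strictly larger $p^n$-bounded direct summand, contradicting maximality --- this is where the maximality hypothesis does its real work. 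The main obstacle I anticipate is precisely this last step: making rigorous the passage ``$a\notin C_n + p^nA$ $\Rightarrow$ $C_n$ is not maximal,'' which requires carefully producing a \emph{direct summand} (not merely a subgroup) of $A$ that is $p^n$-bounded and strictly contains $C_n$; one typically does this by writing $A = C_n \oplus A'$, locating the image $a'$ of $a$ in $A'$, replacing $a'$ by $a' - p(\text{something})$ to kill its $p^n$-multiple if needed, and invoking purity-implies-summand inside $A'$ for the cyclic (or bounded) subgroup generated by the adjusted element. Handling the interaction of the different prime powers $p^n$ as $n$ varies, and the case $\operatorname{Exp}(A) < \infty$ (where $B = A$ eventually), are routine but must be checked.
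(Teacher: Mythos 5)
First, a remark on context: the paper does not prove Theorem~\ref{BasicMaxBounded} at all --- it is quoted from Szele~\cite{Sele7} (it is Theorem~32.4 in~\cite{Fuks}), so there is no in-paper argument to compare against and your proposal must stand on its own. Your converse direction is in reasonable shape: the purity argument (any $b\in B\cap p^kA$ lies in some $C_N=B_1\oplus\dots\oplus B_N$, which is pure because it is a bounded direct summand) is correct, and you correctly identify the remaining issue but leave it open. The standard way to close it: write $A=C_n\oplus A_n$; maximality forces $A_n$ to have no nonzero $p^n$-bounded direct summand, hence no pure cyclic subgroup of order $\leqslant p^n$, hence every nonzero element of $A_n[p]$ has height $\geqslant n$ in $A_n$ (an element of order $p$ and height $k-1<n$ equals $p^{k-1}g$ with $\langle g\rangle$ pure of order $p^k\leqslant p^n$, and pure bounded subgroups are summands); an induction on order then gives $A_n[p^n]\subseteq pA_n$, and taking $n$ equal to the exponent of the order of a given $a$ yields $a\in C_n+pA_n\subseteq B+pA$.

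The genuine gap is in the forward direction, where both sub-claims of your ``cleanest route'' fail as stated. (i) ``$D\cap B$ is pure in $B$ and hence contained in $C_n$'': intersections of pure subgroups need not be pure, and even an honest $p^n$-bounded direct summand of $B$ need not lie in $C_n$ --- in $B=\langle u\rangle\oplus\langle v\rangle\cong\mathbb Z(p)\oplus\mathbb Z(p^2)$ the subgroup $\langle u+pv\rangle$ is a $p$-bounded direct summand not contained in $C_1=\langle u\rangle$; your parenthetical ``up to automorphism'' concedes exactly this, and containment up to automorphism does not give $D\subseteq C_n$. (ii) ``$D/(D\cap B)\hookrightarrow A/B$ is $p$-divisible because it is the image of a summand'': the image of a direct summand of $A$ in $A/B$ need not be a summand of $A/B$, and a bounded subgroup of the divisible group $A/B$ need not vanish ($\mathbb Z(p^\infty)$ contains $\mathbb Z(p^k)$). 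Concretely, in the torsion subgroup $A$ of $\prod_n\mathbb Z(p^n)$ with $B=\bigoplus_n\mathbb Z(p^n)$, the element $x=(g_1,pg_2,p^2g_3,\dots)$ has order $p$ and height $0$, so $\langle x\rangle$ is a $p$-bounded direct summand with $\langle x\rangle\cap B=0$ and nonzero, non-divisible image in $A/B$; so the principle you invoke is false, and for $D\supseteq C_n$ the vanishing of $D/(D\cap B)$ is precisely what has to be proved. The missing idea is to work with the complement rather than with $D\cap B$: by Theorem~\ref{Th_Boyer}, $A=C_n\oplus A_n^*$ with $A_n^*=B_n^*+p^nA$, and a short computation using purity of $B$ shows $(A_n^*)[p]\subseteq p^nA$, so that $A_n^*$ has no nonzero $p^n$-bounded direct summand (such a summand would be pure in $A_n^*$ and would contain an element of order $p$ and height $<n$). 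Then for any $p^n$-bounded summand $D\supseteq C_n$ the modular law gives $D=C_n\oplus(D\cap A_n^*)$, and $D\cap A_n^*$ is a $p^n$-bounded summand of $A_n^*$, hence $0$, so $D=C_n$.
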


Any Abelian $p$-group~$A$ is a direct sum of its divisible part~$D$ (isomorphic to $\bigoplus\limits_{\varkappa_0} \mathbb Z(p^{\infty})$) and its reduced part~$\overline A$ with a basic subgroup
$$
B= \bigoplus\limits_{n=1}^\infty \left( \bigoplus\limits_{\varkappa_n} \mathbb Z(p^n)\right).
$$
The basic subgroup~$B$ is dense in~$\overline A$ in its $p$-adic topology.

\begin{definition}
A finite system $\{ a_1,\dots, a_n\}$  of elements of an Abelian group~$A$ is called \emph{independent}, if for any $m_1,\dots, m_n\in \mathbb Z$ 
$$
m_1 a_1 +\dots +m_n a_n=0 \Longleftrightarrow  m_1a_1=m_2a_2 = \dots = m_na_n=0.
$$

An infinite system $L=\{ a_i\}_{i\in I}$ of elements of the group~$A$ is called \emph{independent} if every finite subsystem of~$L$ is independent. 

An independent system~$M$ of~$A$ is \emph{maximal} if there is no independent system in~$A$ containing~$M$ as a proper subsystem. 
\end{definition}

\begin{definition}
Given  $a\in A$,
 the greatest nonnegative integer~$r$ for which the equation
 $p^rx=a$ is solvable for some $x\in A$, is called the \emph{$p$-height} $h_p(a)$ 
of~$a$.
If  $p^r x=a$ is solvable for all $r$ is,  $a$ is of
 \emph{infinite $p$-height},  $h_{p}(a)=\infty$. If it is completely clear
from the context which prime~$p$ is meant, we  call $h_p(a)$ simply 
the \emph{height} of~$a$ and write $h(a)$.
\end{definition}

\begin{definition}
A reduced Abelian $p$-group $A$ is called \emph{separable}, if it does not contain any non-zero elements of infinite height.
\end{definition}

For a reduced $p$-group $A$ \emph{the first Ulm subgroup} of~$A$ is
$$
A^{\mathbf 1}:=\bigcap_{n=1}^\infty p^n A,
$$
it is the subgroup of~$A$ consisting of all elements of~$A$ of infinite height. Therefore a reduced $A$ is separable if and only if $A^{\mathbf 1}=0$.


\begin{proposition}[see \cite{Fuks}]
For a $p$-group $A$ and elements  $a_1,\dots, a_n\in A$ they belong to one finite
direct summand of~$A$ if and only if all nonzero
$m_1 a_1 + \dots + m_n a_n$ do not have  infinite height.
\end{proposition}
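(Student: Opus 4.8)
The plan is to prove both directions by translating the statement ``$a_1,\dots,a_n$ lie in one finite direct summand of $A$'' into the height condition. For the forward direction, suppose $C$ is a finite direct summand of $A$ containing all the $a_i$, say $A = C \oplus A'$. Since $C$ is a finite abelian $p$-group, it is bounded, say $p^k C = 0$. A standard fact (a bounded pure subgroup is a direct summand, and conversely a direct summand is pure) gives that $C$ is pure in $A$: if $p^r x = c \in C$ with $x \in A$, write $x = c' + a'$ with $c' \in C$, $a' \in A'$, so $p^r a' = 0$ and $p^r c' = c$, witnessing solvability inside $C$. Hence for any nonzero $c = \sum m_i a_i \in C$ we have $h_p^A(c) = h_p^C(c) \leqslant k < \infty$, because heights computed in a pure subgroup agree with those in the ambient group up to the point where the element vanishes, and in a bounded group every nonzero element has finite height. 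This settles one implication.

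For the converse, assume every nonzero element of the subgroup $S := \langle a_1,\dots,a_n\rangle$ has finite $p$-height. Since $S$ is a finitely generated abelian $p$-group, it is finite, hence bounded: $p^k S = 0$ for some $k$. The heights of the finitely many nonzero elements of $S$ are therefore bounded by some integer $N$. The idea is to enlarge $S$ slightly to a subgroup $C$ that is still finite but is now \emph{pure} in $A$, and then invoke the fact that a bounded pure subgroup is a direct summand. Concretely, one builds $C$ by the usual ``purification'' process: for each generator $a_i$ of height $h_i < \infty$, pick $b_i \in A$ with $p^{h_i} b_i = $ (the $p^{h_i}$-divisible part of $a_i$) and adjoin it; iterate finitely many times. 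Because all relevant heights in $S$ are $\leqslant N$, this process terminates and produces a finite subgroup $C \supseteq S$ which is pure in $A$. Being finite it is bounded, so by the cited result $C$ is a direct summand of $A$, and it is the desired finite direct summand containing $a_1,\dots,a_n$.

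The main obstacle is making the purification step precise and confirming that it terminates while keeping $C$ finite. The key point to verify carefully is that if $a \in S$ has finite height $h$ in $A$, then one only needs to add a single element $b$ with $p^h b = a$ (not an infinite $p$-divisible chain), and that after adding all such $b$ for a generating set, the resulting group has the property that heights of its elements computed internally agree with heights in $A$ — i.e.\ it is pure. Here the hypothesis that \emph{all} nonzero elements of $S$ (not just the $a_i$) have finite height is essential, since during the construction new $\mathbb{Z}$-combinations arise and each must be kept out of $A^{\mathbf 1}$. One should also note that the statement as phrased is about finite direct summands of arbitrary $p$-groups $A$, not merely separable ones; the condition ``no nonzero $\mathbb{Z}$-combination has infinite height'' is exactly the obstruction, and $A$ itself need not be separable. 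Finally, I would record the underlying standard facts — ``a direct summand is pure'' and ``a bounded pure subgroup is a direct summand'' — with a reference to \cite{Fuks}, so the argument reduces to the purification bookkeeping above.
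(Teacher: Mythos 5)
Note first that the paper itself gives no proof of this proposition: it is quoted from \cite{Fuks} (it is essentially Corollary 27.8 there, obtained from Theorem 27.7 together with Kulikov's theorem that a bounded pure subgroup is a direct summand), so the comparison below is with that standard argument. Your forward direction is complete and correct: a direct summand is pure, heights computed in a pure subgroup agree with heights in the ambient group, and a nonzero element of a finite (hence bounded) group has finite height there.

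The converse direction, however, has a genuine gap, and it sits exactly where you flag it. The claim that the purification process ``terminates and produces a finite pure subgroup $C\supseteq S$ because all relevant heights in $S$ are $\leqslant N$'' is not justified: after you adjoin roots $b_i$ with $p^{h_i}b_i=a_i$, the relevant heights are those of the elements of the \emph{enlarged} subgroup $\langle S,b_1,\dots,b_n\rangle$, and these are new $\mathbb Z$-linear combinations (for instance $b_i-b_j$ when $a_i=a_j$ and the roots are chosen differently) whose heights in $A$ are in no way controlled by~$N$; they may force further adjunctions of elements of ever larger order, and nothing in your sketch shows this closure process stops, nor that the limit stays finite, nor that a careless choice of the roots leaves the subgroup disjoint from $A^{\mathbf 1}$ --- and a subgroup meeting $A^{\mathbf 1}$ nontrivially can never lie in a finite direct summand. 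The standard proof is not a naive closure but an induction on the rank of the socle $S[p]$: choose a nonzero $a\in S[p]$ whose height $h$ is \emph{maximal} in $S[p]$, write $a=p^h b$, check that $\langle b\rangle$ is a pure cyclic subgroup of order $p^{h+1}$ and hence a direct summand $A=\langle b\rangle\oplus A_1$, and then --- this is the real content, absent from your sketch --- verify that the projection of $S$ into $A_1$ still meets $A_1^{\mathbf 1}=A^{\mathbf 1}$ trivially, which is precisely where the maximality of $h$ is used; induction on the socle then finishes the argument. Until that step is supplied, your converse is a plan rather than a proof.
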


\begin{theorem}[see \cite{Fuks}, \cite{Bojer1}]\label{Th_Boyer}
Suppose that $B$ is a subgroup of $p$-group~$A$,
$$
B=B_1\oplus B_2\oplus\dots\oplus B_n\oplus \dots,
$$
where 
$$
B_n\cong \bigoplus_{\mu_n} \mathbb Z(p^n).
$$
The subgroup $B$ is a basic subgroup of~$A$ if and only if
$$
A=B_1\oplus B_2\oplus \dots\oplus B_n\oplus (B_n^*+p^n A),
$$
where $n\in \mathbb N$,
$$
B_n^*=B_{n+1}\oplus B_{n+2}\oplus \dots
$$
\end{theorem}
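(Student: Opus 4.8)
The plan is to derive both implications directly from the three defining conditions of a basic subgroup, exploiting the splitting $B=(B_1\oplus\dots\oplus B_n)\oplus B_n^*$ together with the elementary remark that $p^nB_i=0$ for $i\le n$ (since $B_i$ is a direct sum of copies of $\mathbb Z(p^i)$), so that $p^nB=p^nB_n^*\subseteq B_n^*$.

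For the ``only if'' direction, assume $B$ is basic and fix $n$. Since $A/B$ is $p$-divisible we have $p^n(A/B)=A/B$, i.e. $A=B+p^nA$; writing each $b\in B$ as $b=(b_1+\dots+b_n)+b^*$ with $b_i\in B_i$, $b^*\in B_n^*$, this immediately gives $A=(B_1\oplus\dots\oplus B_n)+(B_n^*+p^nA)$. To see the sum is direct, take $x$ in the intersection and write $x=b_1+\dots+b_n=b^*+p^na$ with $b_i\in B_i$, $b^*\in B_n^*$, $a\in A$; then $x-b^*\in B\cap p^nA=p^nB$ by purity of $B$, and $p^nB\subseteq B_n^*$ by the remark above, so $x\in B_n^*$; as $(B_1\oplus\dots\oplus B_n)\cap B_n^*=0$ inside $B$, we conclude $x=0$.

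For the ``if'' direction, suppose $A=B_1\oplus\dots\oplus B_n\oplus(B_n^*+p^nA)$ for every $n\in\mathbb N$, and put $A_n:=B_n^*+p^nA$, so $A=(B_1\oplus\dots\oplus B_n)\oplus A_n$. Condition (1) of the definition of a basic subgroup holds by hypothesis on $B$. For condition (3), the case $n=1$ gives $A=B_1+(B_1^*+pA)\subseteq B+pA$, hence $A=B+pA$ and $A/B$ is $p$-divisible. For condition (2) (purity), let $b\in B\cap p^kA$, say $b=p^ka$, and choose $N$ large enough that $b\in B_1\oplus\dots\oplus B_N$ (possible since elements of $B$ have finite support). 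Decomposing $a=a'+a''$ along $A=(B_1\oplus\dots\oplus B_N)\oplus A_N$ yields $b=p^ka'+p^ka''$ with $p^ka'\in B_1\oplus\dots\oplus B_N$; therefore $p^ka''=b-p^ka''$ lies in $(B_1\oplus\dots\oplus B_N)\cap A_N=0$, so $b=p^ka'\in p^k(B_1\oplus\dots\oplus B_N)\subseteq p^kB$. Thus $B\cap p^kA=p^kB$ for all $k$ and $B$ is pure, so $B$ is basic.

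I expect the only delicate points to be bookkeeping: the fact that every element of $B$ lies in some finite sub-sum $B_1\oplus\dots\oplus B_N$ (which makes the above ``telescoping'' decompositions available and is what lets the purity argument close), and the identity $p^nB=p^nB_n^*$. As an alternative route for the ``if'' direction one could instead verify that $B_1\oplus\dots\oplus B_n$ is a maximal $p^n$-bounded direct summand of $A$ for every $n$ and then invoke Theorem~\ref{BasicMaxBounded}, but the direct check of purity and $p$-divisibility given above seems shorter and more self-contained.
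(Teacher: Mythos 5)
The paper does not actually prove this statement: it is quoted as a known result with a pointer to Fuchs and to Boyer's paper, so there is no in-text argument to compare yours against. Judged on its own, your proof is correct and is essentially the standard direct verification one finds in Fuchs. Both directions check out: in the ``only if'' direction the key chain $x-b^*\in B\cap p^nA=p^nB=p^nB_n^*\subseteq B_n^*$ is exactly right, and in the ``if'' direction the purity argument via the decomposition $a=a'+a''$ along $A=(B_1\oplus\dots\oplus B_N)\oplus A_N$ works because $A_N$ is a subgroup closed under multiplication by $p^k$. Two small points to tidy up. First, there is a typo: ``$p^ka''=b-p^ka''$'' should read $p^ka''=b-p^ka'$. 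Second, you verify $B\cap p^kA=p^kB$ only for powers of $p$, whereas the paper's definition of purity quantifies over all integers $n$; for a $p$-group this is equivalent, since multiplication by an integer coprime to $p$ is an automorphism of both $A$ and $B$, but it is worth one sentence to say so. Your closing remark is also apt: the alternative route through Theorem~\ref{BasicMaxBounded} (maximal $p^n$-bounded summands) would work but requires establishing maximality, which your direct check avoids.
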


\medskip





\begin{definition}
	By the \emph{rank} $r(A)$ of a~group~$A$ we mean the cardinality of a~maximal independent system containing only elements of infinite and prime power orders.
	
	The \emph{final rank} $\mathrm{fin}\, r(A)$  of a $p$-group~$A$ is the minumum of all cardinal numbers $\mathrm{rank}\, (p^n A)$, $n=1,2,\dots$ (see~Sele, \cite{Sele7}). 
\end{definition}

\begin{example}
	If 
	$$
	A=B=B_1\oplus B_2\oplus \dots \oplus B_n \oplus \dots,\qquad B_n=\bigoplus_{\mu_n} \mathbb Z(p^n),
	$$
	then 
	$$
	\mathrm{rank}\, (p^n A)= \mathrm{rank}\, (B_{n+1}\oplus \dots)=\sum_{i=n+1}^\infty \mu_i
	$$
	and therefore $\mathrm{fin}\, r(A)=0$ if and only if $\Exp A =0$, $\mathrm{fin}\, r(A)=\infty$ if and only if $\Exp A=\infty$ (see~\cite{Fuks}, \S\,35).
\end{example}

\medskip

Let us consider now reduced Abelian $p$-groups.

If $A^{\mathbf 1}$ is the (first) Ulm subgroup of~$A$, then for any ordinal number~$\alpha$
we define
$$
A^{\alpha+1}:= (A^\alpha)^{\mathbf 1}\text{ and } A^\alpha:= \bigcap_{\gamma < \alpha} A^\gamma,
$$
if $\alpha$ is a limit ordinal number.

Then it is defined the well-ordered sequence of subgroups
$$
A=A^0\supset A^{\mathbf 1} \supset \dots \supset A^\alpha \supset \dots \supset A^\varkappa=0
$$
for some ordinal number~$\varkappa$.

The subgroup $A^\alpha$ is called the \emph{$\alpha$-th Ulm subgroup} of~$A$, and the quotient group $A_\alpha=A^\alpha / A^{\alpha+1}$ is called the \emph{$\alpha$-th Ulm factor} of~$A$.

The well-ordered sequence 
$$
A_0, A_1,\dots , A_\alpha, \dots \qquad (\alpha < \varkappa)
$$
is called the \emph{Ulm sequence} of~$A$, and $\varkappa$ is the \emph{Ulm type} of~$A$.

All Ulm factors of~$A$ are separable $p$-groups and they all  (except maybe the last one) are unbounded.

The following theorem is very important for us, but we will use only the first Ulm subgroup.

\begin{theorem}[\cite{Fuks}, Theorem 76.1]
	Let $\mu$ be a cardinal number,  $\varkappa$ be an ordinal number, 
	$$
	A_0, A_1,\dots , A_\alpha, \dots \qquad (\alpha < \varkappa)\eqno (1)
	$$
	be a sequence of nonzero $p$-groups. 
	
	A reduced $p$-group~$A$ of  the cardinality~$\mu$ and Ulm lenght~$\varkappa$ with the Ulm sequence~\emph{(1)}, exists if and only if the following holds:
	
	\emph{(a)} all $A_\alpha$, $\alpha < \varkappa$, are separable;
	
	\emph{(b)} $\sum\limits_{0\leqslant \alpha < \varkappa} |A_\alpha | \leqslant \mu \leqslant \prod\limits_{0\leqslant \mu < \min (\omega, \varkappa)} |A_n|$;
	
	\emph{(c)} $r (B_{\alpha+1}) \leqslant \mathrm{fin}\, r(A_{\sigma})$ for all $\alpha+1< \varkappa$, where $B_\alpha$ is a basic subgroup of~$A_\alpha$;
	
	\emph{(d)} $\sum\limits_{\lambda < \alpha < \varkappa} |A_{\alpha}|\leqslant |A_{\lambda}|^{\aleph_0}$ for any $0\leqslant \lambda < \varkappa$.
\end{theorem}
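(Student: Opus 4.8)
The plan is to prove the two implications separately. The ``only if'' direction amounts to reading the four inequalities off the general structure theory of reduced $p$-groups; the ``if'' direction is an explicit transfinite construction — a form of Zippin's existence theorem — and this is where all the real work lies.

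\emph{Necessity.} Assume $A$ is reduced of cardinality $\mu$ and Ulm length $\varkappa$ with Ulm sequence $(1)$. Condition (a) is exactly the fact recalled above that every Ulm factor of a reduced $p$-group is separable. For the lower bound in (b) I would pick, for each $\alpha<\varkappa$, a transversal in $A^\alpha$ of $A^{\alpha+1}$; these sets are pairwise disjoint and exhibit $\sum_{\alpha}|A_\alpha|$ distinct elements of $A$. The upper bound in (b) (relevant only when $\varkappa\geqslant\omega$) together with (d) are the two faces of the standard cardinality estimate for reduced $p$-groups via $p$-adic completions: a separable $p$-group $C$ has torsion-completion of size at most $|C|^{\aleph_0}$, and an element of $A^{\lambda+1}$, being of infinite height in $A^\lambda$, is encoded by Cauchy data over the separable factor $A_\lambda=A^\lambda/A^{\lambda+1}$, so that the whole tower strictly below level $\lambda$, of size $\sum_{\lambda<\alpha<\varkappa}|A_\alpha|$, embeds into something of size $|A_\lambda|^{\aleph_0}$; iterating this over $\lambda=0,1,2,\dots$ also yields $\mu\leqslant\prod_{n<\min(\omega,\varkappa)}|A_n|$. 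Finally, (c) reflects, via Boyer's description of $A^\alpha$ recalled above, the interaction between the next Ulm factor $A_{\alpha+1}=(A^\alpha)^{\mathbf 1}/A^{\alpha+2}$ and the $p^n$-bounded summands of $A^\alpha$: the cyclic summands of a basic subgroup $B_{\alpha+1}$ of $A_{\alpha+1}$ can be placed only if every $p^nA_\alpha$ has enough rank, i.e.\ $r(B_{\alpha+1})\leqslant\mathrm{fin}\,r(A_\alpha)$.

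\emph{Sufficiency.} I would build $A$ by transfinite recursion on $\varkappa$, producing at each stage $\beta\leqslant\varkappa$ a reduced $p$-group $A^{(\beta)}$ of Ulm length $\beta$ with Ulm sequence $(A_\alpha)_{\alpha<\beta}$, of a cardinality left free within the interval permitted by (b) and (d), and arranging these into a coherent inverse system with surjections $A^{(\gamma)}\twoheadrightarrow A^{(\beta)}=A^{(\gamma)}/(A^{(\gamma)})^\beta$ for $\beta<\gamma$; then $A:=A^{(\varkappa)}$ is the group sought, the base step being $A^{(1)}:=A_0$. The key tool is a \emph{gluing lemma}: if $C$ is a separable $p$-group, $G$ a reduced $p$-group, and $r(B')\leqslant\mathrm{fin}\,r(C)$ for a basic subgroup $B'$ of $G/G^{\mathbf 1}$, then there is a reduced $p$-group $E$ with $E^{\mathbf 1}=G$ and $E/G\cong C$, whose cardinality can be prescribed to be any value in the admissible interval by replacing it with a suitable pure dense subgroup of a maximal such group. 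At a successor $\beta+1$ with $\beta=\gamma+1$ again a successor, I would append $A_\beta$ as a new bottom Ulm factor: apply the gluing lemma with $C=A_\gamma$ (the current last factor) and $G=A_\beta$ — its hypothesis $r(B_{\gamma+1})\leqslant\mathrm{fin}\,r(A_\gamma)$ is precisely condition (c) at $\alpha=\gamma$ — and amalgamate the resulting two-layer group $E$ at the bottom of $A^{(\beta)}$, checking that $A_\beta$ becomes the $\beta$-th Ulm subgroup; when $\beta$ is a limit, appending $A_\beta$ below the ``limit layer'' of $A^{(\beta)}$ is instead controlled by the size condition (d). At a limit $\beta$ I would take $A^{(\beta)}$ to be the torsion subgroup of $\varprojlim_{\alpha<\beta}A^{(\alpha)}$, cut down to a pure dense subgroup of the desired cardinality, and verify $(A^{(\beta)})^\alpha/(A^{(\beta)})^{\alpha+1}\cong A_\alpha$ and $(A^{(\beta)})^\beta=0$; conditions (b) and (d) are precisely what make the final group $A^{(\varkappa)}$ have cardinality exactly $\mu$.

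The hard part will be the gluing lemma, together with the bookkeeping that keeps the recursion coherent and the cardinalities pinned down. In the lemma one must build $E$ so that the cyclic summands of a basic subgroup of $C$ lift to elements of $E$ having precisely the prescribed finite heights, while every element of $G$ simultaneously acquires infinite height and $E^{\mathbf 1}$ comes out equal to $G$ on the nose; the inequality $r(B')\leqslant\mathrm{fin}\,r(C)$ — which is condition (c) in the form needed at each successor step — is exactly what guarantees enough ``room'' of infinite height inside $C$ to accommodate $G$, while (b) and (d) are what keep the cardinality inside the admissible interval at successor and, above all, at limit stages. Once the gluing lemma and the limit step are in hand, checking that $A^{(\varkappa)}$ is reduced, of Ulm length $\varkappa$, with Ulm sequence $(1)$ and cardinality $\mu$, is routine.
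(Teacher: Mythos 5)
The paper offers no proof of this statement: it is imported verbatim from Fuchs (Theorem~76.1, essentially Kulikov's existence theorem), so there is nothing internal to compare your argument against; I can only judge the proposal on its own terms. Your overall architecture is the standard one --- necessity read off from the structure theory of the Ulm filtration, sufficiency by a transfinite construction in the spirit of Zippin --- and the necessity half is essentially sound as sketched (the transversal argument for the lower bound in (b) works because representatives of nontrivial cosets at level $\alpha$ lie in $A^{\alpha}\setminus A^{\alpha+1}$, which are pairwise disjoint; the arguments for (c), (d) and the product bound are the right ideas, though the injectivity of the ``Cauchy data'' encoding of $A^{\lambda+1}$ into $\prod_n A_\lambda$ only holds modulo $A^{\lambda+2}$ and needs to be iterated along the filtration rather than applied in one shot).

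The genuine gap is in the sufficiency direction, where the entire content of the theorem has been concentrated into an unproven ``gluing lemma'' plus an unverified limit step. Two concrete problems. First, the gluing lemma as stated --- given separable $C$ and reduced $G$ with $r(B')\leqslant \mathrm{fin}\,r(C)$, produce $E$ with $E^{\mathbf 1}=G$ and $E/G\cong C$ --- is itself the existence theorem for Ulm length~$2$, and proving it (choosing extensions of $C$ by $G$ in which every element of $G$ acquires infinite height while $C$ stays separable and no new elements of infinite height appear) is where all the work of Fuchs's \S\S 75--76 lives; asserting it does not discharge it. Moreover, ``amalgamating $E$ at the bottom of $A^{(\beta)}$'' at a successor stage must be shown not to disturb the Ulm factors already built, i.e.\ one must re-verify $(A^{(\beta+1)})^{\alpha}/(A^{(\beta+1)})^{\alpha+1}\cong A_\alpha$ for all $\alpha<\beta$, and no mechanism for this is given. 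Second, at limit stages the torsion subgroup of $\varprojlim_{\alpha<\beta}A^{(\alpha)}$ is in general far too large and acquires cotorsion/completeness phenomena that change the Ulm invariants; ``cutting down to a pure dense subgroup of the desired cardinality'' while keeping every factor $A_\alpha$ intact and keeping $(A^{(\beta)})^{\beta}=0$ is precisely the delicate point that conditions (b) and (d) exist to control, and it is not routine. As it stands the proposal is a correct roadmap to the known proof, not a proof.
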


We are mostly interested in Abelian groups of the Ulm length~$2$ (i.\,e., $A^{\mathbf 1}\ne 0$ and $A^{\mathbf 2}=0$). 

For such groups the conditions (a)--(d) can be rewritten as

(a) $A_0, A_1$ are separable;

(b) $|A_0|+|A_1| \leqslant \mu \leqslant |A_0| \times |A_1|$. Since $A_0$ is infinite, then $|A_0|+|A_1|=\max (|A_0|, |A_1|)$ and $|A_0|\cdot |A_1|=\max (|A_0|, |A_1|)$ and therefore this condition means $\mu = \max (|A_0|, |A_1|)$;

(c) $r(B_1) \leqslant \mathrm{fin}\, r(A_0)$

(d) $|A_1|\leqslant |A_0|^{\aleph_0}$.

This means that any separable countable or finite separable Abelian $p$-group is possible as the first Ulm subgroup~$A^{\mathbf 1}$, which is the most important corollary for our needs. 

\section{Elementary equivalence of periodic Abelian groups}\leavevmode

\subsection{Elementary equivalence of Abelian $p$-groups}\leavevmode
Let us first concentrate on elementary equivalence of two Abelian $p$-groups.

Suppose that $A=D\oplus \overline A$, where $\overline A$ is reduced and has $B$ as its basic subgroup.

Assume also that
$$
D\cong \bigoplus_{\varkappa_0} \mathbb Z(p^\infty),\quad B_k\cong \bigoplus_{\varkappa_k} \mathbb Z(p^k),\quad k=1,\dots, n,\dots
$$

{\bf 1.} For the first invariant $D(p;A)$ let us fix some $n\in \mathbb N$ and represent $A$ as
$$
A=D\oplus B_1\oplus B_2\oplus \dots \oplus B_n \oplus (B_n^*+p^n\overline A)
$$
as in Theorem~\ref{Th_Boyer}.

Then 
$$
p^n A=(p^nD)\oplus p^n(B_n^* +p^n\overline A)= D\oplus p^n(B_n^* +p^n\overline A),
$$
therefore 
$$
\dim p^nA [p]=\varkappa_0 + \dim p^n(B_n^* +p^n\overline A)[p].
$$
If $B$ is bounded (in this case always $A^{\mathbf 1}=0$), then 
$$
\lim\limits_{n\to \infty} \dim p^n(B_n^* +p^n\overline A)[p]=0
$$
and 
$$
D(p;A)=\varkappa_0.
$$ 
If $B$ is unbounded, then 
$$
\varkappa_0+\lim\limits_{n\to \infty} \dim p^n(B_n^* +p^n\overline A)[p]=\varkappa_0+\mathrm{fin}\, r(B)+ \dim A^{\mathbf 1}[p]=\max (\varkappa_0, \mathrm{fin}\, r(B), \dim A^{\mathbf 1}[p] )= \infty.
$$

This shows that for $p$-group we have two cases: If $B$ is bounded, we have $\varkappa_0$, if not, then we have $D(p;A)=\infty$.
\medskip

{\bf 2.} The second invariant is
\begin{multline*}
\lim\limits_{n\to \infty} \dim (p^nA/p^{n+1}A)=\lim\limits_{n\to \infty} \dim (p^nD/p^{n+1}D \oplus p^n\overline A/p^{n+1}\overline A)=\lim\limits_{n\to \infty} \dim (p^n\overline A/p^{n+1}\overline A)=\\
=\lim\limits_{n\to \infty} \dim  (p^n B_1/p^{n+1}B_1+\dots+ p^nB_{n+1}/p^{n+1}B_{n+1}+p^n(B_{n+1}^*+p^{n+1}\overline A)/p^{n+1}(B_{n+1}^*+p^{n+1}\overline A))=\\
=\lim\limits_{n\to \infty} (\varkappa_{n+1}+\varkappa_{n+2}+\dots)= \mathrm{fin}\, r(B).
\end{multline*}

\medskip

{\bf 3.} The third invariant is
$$
U(p,n-1;A):= \dim ((p^{n-1}A)[p] / (p^nA)[p])=\varkappa_n.
$$

\medskip

Therefore \emph{two Abelian $p$-groups are elementarily equivalent if and only if  their basic subgroups are elementarily equivalent and in the case of bounded basic subgroups they have the same ranks of their divisible parts. }

\subsection{Periodic Abelian groups}\leavevmode

Now let us assume that 
$$
A=\bigoplus_{p\text{ is prime}} A_p,
$$
where every $A_p$ is a $p$-group.

For every prime $p$ all invariants (except $\Exp A$) give the results $0$ for all $A_q$, $q\ne p$, since $p\cdot A_q=A_q$ and $A_q [p]=0$. 
Therefore if two periodic Abelian groups are elementarily equivalent, then all their $p$-components are elementarily equivalent. 

If for two Abelian periodic groups $A=  \bigoplus\limits_{p\text{ is prime}} A_p$ and $A'=  \bigoplus\limits_{p\text{ is prime}} A_p'$ the corresponding components are elementarily equivalent: $A_p\equiv A_p'$ for all prime~$p$, then all invariants $D(p;\dots)$, $Tf(p;\dots)$, $U(p,n; \dots)$ coincide automatically, and we only need to consider $Exp(\dots)$. 

If for at least one $p$ the subgroups $A_p$ and $A_p'$ are unbounded, then the groups $A$ and~$A'$ are also unbounded and $\Exp(A)=\Exp(A')=\infty$. 

If all $A_p$, $A_p'$ are bounded, then all these groups are direct sums of cyclic groups and in this case we know all $m\in \mathbb N$, for which $\mathbb Z(m)\subset A$ and $\mathbb Z(m)\subset A'$. Therefore $\Exp A=\Exp A'$.

So we see that two periodic Abelian groups are elementarily equivalent if and only if their $p$-components are elementarily equivalent for all prime~$p$.

\section{Abelian $p$-groups with separable reduced parts}\leavevmode

In this section we will consider a particular case: Abelian $p$-groups with separable reduced parts (with zero first Ulm subgroups).

\subsection{Types of elements}\leavevmode

Suppose that the decomposition $A=D\oplus \overline A$ is fixed. Suppose also that we have $m$-tuple $(g_1,\dots, g_m)$ of elements of~$A$ and its type $\mathrm{tp}(g_1,\dots, g_m)$. If $g_i=d_i+a_i$, $i=1,\dots, m$, is a decomposition of these elements with respect to the direct summands $D$ and $\overline A$, then $\mathrm{tp}(d_1,\dots,d_m,a_1,\dots a_m)$ contains  $\mathrm{tp}(g_1,\dots,g_m)=\mathrm{tp}(d_1+a_1,\dots,d_m+a_m)$. Therefore we always can assume that we study only types $\mathrm{tp}(d_1,\dots,d_\ell, a_1,\dots, a_m)$, $d_1,\dots,d_\ell\in D$, $a_1,\dots, a_m\in \overline A$.

\medskip

Let us for some fixed decomposition $A=D\oplus \overline A$  have $d_1,\dots,d_\ell\in D$ and $a_1,\dots, a_m\in \overline A$. The elements $d_1,\dots, d_\ell$ generate a direct summand 
$$
\langle d_1,\dots, d_\ell\rangle=D_1\cong \bigoplus\limits_{n_0} \mathbb Z(p^\infty)\text{ in }D.
$$
Knowing $\mathrm{tp}(d_1,\dots, d_\ell)$ we can easily define~$n_0$: it is $k-1$, where $k$ is the minimal natural number such that there exist a subset $\{ m_1,\dots, m_k\} \subset \{ 1,\dots ,\ell\}$, $\alpha_1,\dots, \alpha_k\in \mathbb Z$, $0\leqslant \alpha_i< \mathrm{ord} (d_{m_i})$, $\alpha_1,\dots, \alpha_k$ are not all zeros, such that
$$
\alpha_1 d_{m_1}+\dots + \alpha_k d_{m_k} =0.
$$

Now let us study the elements $a_1,\dots, a_m$. All these elements have finite heights. Suppose that
$$
\mathrm{ord}(a_i)=p^{t_i},\quad h(a_i)= p^{s_i}, \quad i=1,\dots,\ell.
$$
Let us take instead of every~$a_i$ an element~$b_i$ such that $p^{s_i}b_i=a_i$. Then every $b_i$ has the height~$0$ and therefore generates a direct summand 
$$
a_i\in \langle b_i\rangle \cong \mathbb Z(p^{t_i+s_i}).
$$

If we take the subgroup $\overline B=\langle b_1,\dots,b_m\rangle$, it is a finite subgroup of~$\overline A$. Since $\overline A$ does not contain any elements of infinite height and $\overline B$ is finite, then the heights of all nonzero elements of~$\overline B$ in~$\overline A$ are bounded, therefore $\overline B$ is embedded in a direct summand of~$\overline A$. This summand is finite and is isomorphic to
$$
\overline B_1 \oplus \dots \oplus \overline B_q,\text{ where }\overline B_i \cong \bigoplus\limits_{n_i}\mathbb Z(p^i).
$$
Of course all $n_i$ are defined by formulas with $a_1,\dots, a_m$ as parameters. 

Also, we can find a basic subgroup $B$ of~$\overline A$ such that $\overline B$ is a direct summand of~$B$.

\smallskip

\subsection{Isotypical equivalence of Abelian $p$-groups with separable reduced parts}

\begin{theorem}\label{TheorA}
	For any prime $p$ two Abelian $p$-groups $A_1$ and $A_2$  with separable reduced parts are isotypic if and only if their divisible parts and their basic subgroups  are elementarily equivalent.
\end{theorem}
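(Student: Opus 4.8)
The ``only if'' direction is the easy one, and I would dispatch it using only the material of Sections~2 and~4. Isotypic groups are elementarily equivalent, so by the analysis of Section~4.1 the basic subgroups of $A_1$ and $A_2$ are elementarily equivalent; it remains to see that the divisible parts $D_i\cong\bigoplus_{\varkappa_0^{(i)}}\mathbb Z(p^\infty)$ are elementarily equivalent, i.e.\ that $\varkappa_0^{(1)}$ and $\varkappa_0^{(2)}$ coincide or are both infinite. Here separability enters: for $g=d+a$ with $d\in D_i$, $a\in\overline{A_i}$ one has $h_p(g)=h_p(a)$, so $g$ has infinite height exactly when $g\in D_i$. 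Hence for each $k$ the assertion ``there are $k$ independent elements each divisible by every power of $p$'' is captured by a family of first order formulas that is realizable on a tuple of $A_i$ iff $\varkappa_0^{(i)}\geqslant k$; since this information lies in the sets of realized types, isotypicity forces $\min(\varkappa_0^{(1)},\omega)=\min(\varkappa_0^{(2)},\omega)$, which is precisely $D_1\equiv D_2$.

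For the ``if'' direction it is enough, by symmetry, to realize in $A_2$ every type realized in $A_1$. I fix such a type and, using the reduction of Section~5.1, take it to be $\mathrm{tp}^{A_1}(\bar d,\bar a)$ with $\bar d=(d_1,\dots,d_\ell)\in D_1$, $\bar a=(a_1,\dots,a_m)\in\overline{A_1}$. As in Section~5.1 I extract the finite data: the integer $n_0$ together with the finite group $\langle\bar d\rangle$ inside a divisible summand $D_1'\cong\bigoplus_{n_0}\mathbb Z(p^\infty)$ of $D_1$; and a finite summand $\overline B^{(1)}=\overline B_1^{(1)}\oplus\dots\oplus\overline B_q^{(1)}$, $\overline B_i^{(1)}\cong\bigoplus_{n_i}\mathbb Z(p^i)$, of a basic subgroup $B^{(1)}$ of $\overline{A_1}$, containing all the $b_j$ (hence $\bar a$) and being a direct summand of $\overline{A_1}$. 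Put $S_1:=D_1'\oplus\overline B^{(1)}$, so that $A_1=S_1\oplus C_1$ and the whole tuple $(\bar d,\bar a)$ lies in $S_1$.

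Next I build the counterpart in $A_2$. From $D_1\equiv D_2$ one gets $\varkappa_0^{(2)}\geqslant n_0$, giving a summand $D_2'\cong\bigoplus_{n_0}\mathbb Z(p^\infty)$ of $D_2$; and from elementary equivalence of the basic subgroups (Szmielew's Ulm invariants) each multiplicity $\varkappa_i$ of $\mathbb Z(p^i)$ in a basic subgroup $B^{(2)}$ of $\overline{A_2}$ is $\geqslant n_i$, so $B^{(2)}$ has a finite summand $\overline B^{(2)}\cong\overline B^{(1)}$, which is bounded and a summand of a basic subgroup, hence --- by Theorem~\ref{BasicMaxBounded}, or by transitivity of purity together with the fact that bounded pure subgroups split --- a direct summand of $\overline{A_2}$. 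Set $S_2:=D_2'\oplus\overline B^{(2)}$, $A_2=S_2\oplus C_2$, fix an isomorphism $\varphi\colon S_1\to S_2$, and let $(\bar d',\bar a'):=\varphi(\bar d,\bar a)$. It remains to prove $\mathrm{tp}^{A_1}(\bar d,\bar a)=\mathrm{tp}^{A_2}(\bar d',\bar a')$. For this I would first check $C_1\equiv C_2$: the complements of $\overline B^{(i)}$ have basic subgroups obtained from $B^{(i)}$ by deleting $\overline B^{(i)}$, and subtracting the finite multiplicities $n_i$ preserves the relation ``equal or both infinite'', as does passing from $\varkappa_0^{(i)}$ to $\varkappa_0^{(i)}-n_0$; so Section~4.1 yields $C_1\equiv C_2$.

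The core of the argument is then a Feferman--Vaught / Ehrenfeucht--Fra\"iss\'e composition step for the direct sum: since the group operation on $A_i=S_i\oplus C_i$ is coordinatewise, for every $n$ a winning strategy for Duplicator in the $n$-round EF game on $(A_1,\bar d,\bar a)$ versus $(A_2,\bar d',\bar a')$ is assembled from a winning strategy on $(S_1,\bar d,\bar a)$ versus $(S_2,\bar d',\bar a')$ --- available because $\varphi$ is an isomorphism carrying one marked tuple onto the other --- together with a winning strategy on $C_1$ versus $C_2$, which exists for every $n$ since $C_1\equiv C_2$. As this works for all $n$, the two tuples satisfy the same formulas, i.e.\ have the same type; equivalently, one may quote Feferman--Vaught directly: the theory of $S\oplus C$, and the type over $S$ of a tuple from $S$, is determined by the isomorphism type of $S$ and $\mathrm{Th}(C)$. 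I expect this composition step to be the only genuine obstacle --- one must ensure that quantifiers of $A_i$ ranging partly over $C_i$ are correctly matched, which is exactly what the coordinatewise splitting of the game delivers; the Section~5.1 reduction, the cardinal bookkeeping on $n_0$, $n_i$, $\varkappa_0$, $\varkappa_i$, and the splitting of $\overline B^{(i)}$ off $\overline{A_i}$ are routine given the facts already recorded above.
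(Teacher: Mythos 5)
Your proof is correct and follows essentially the same route as the paper's: split off a finite divisible-plus-bounded direct summand containing the given tuple, produce an isomorphic summand in the other group using the matching of the cardinal invariants $n_0\leqslant\varkappa_0$ and $n_i\leqslant\varkappa_i$, and observe that the complements are elementarily equivalent. You additionally make explicit two points the paper leaves implicit or defers: the Feferman--Vaught/EF composition step justifying that an isomorphism of the marked summands together with $C_1\equiv C_2$ yields equality of types, and the ``only if'' direction via independent tuples of infinite height (which in a separable group must lie in the divisible part); both additions are sound.
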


\begin{proof}
	Let $A_1$ and $A_2$ be two  Abelian $p$-groups with elementarily equivalent divisible parts and elementarily equivalent basic subgroups, $A_1=D_1\oplus \overline A_1$, $A_2=D_2\oplus \overline A_2$ be their decompositions in the direct sum of divisible and reduced separable subgroups, $(d_1,\dots, d_\ell,a_1,\dots,a_m)$ be a tuple of element of~$A_1$, where $d_1,\dots, d_\ell\in D_1$ and $a_1,\dots, a_m\in \overline A_1$. These elements generate (in the sense above) a direct summand of~$A_1$, isomorphic to
	$$
	C=C_d\oplus C_r\cong \bigoplus\limits_{n_0} \mathbb Z(p^\infty) \oplus \bigoplus\limits_{t=1}^m \left( \bigoplus\limits_{n_t} \mathbb Z(p^t)\right).
	$$
	Therefore if 
	$$
	D_1= \bigoplus\limits_{\varkappa_0} \mathbb Z(p^\infty), \quad D_2= \bigoplus\limits_{\varkappa_0'} \mathbb Z(p^\infty)
	$$
	and 
	$$
	B_1=\bigoplus\limits_{t=1}^\infty \left( \bigoplus\limits_{\varkappa_t} \mathbb Z(p^t)\right),\quad B_2=\bigoplus\limits_{t=1}^\infty \left( \bigoplus\limits_{\varkappa_t'} \mathbb Z(p^t)\right),
	$$
	then for all $i=0,1,\dots, m$
	$$
	n_i \leqslant \varkappa_i=\varkappa_i' \text{ or } \varkappa_i,\varkappa_i'\text{ are both infinite}.
	$$
	So we can denote
	$$
	A_1=D_1\oplus \overline A_1=(C_d\oplus \widetilde C_d)\oplus (C_r\oplus \widetilde B_1^{(1)}\oplus \dots
	\oplus \widetilde B_1^{(m)})\oplus (\overline B_1^{(m+1)}+p^m\overline A_1)=C\oplus \widetilde C_1.
	$$
	Similarly the group $A_2$ can be decomposed as
	$$
	A_2=D_2\oplus \overline A_2 \cong(C_d\oplus \widetilde C_d')\oplus (C_r\oplus \widetilde B_2^{(1)}\oplus \dots
	\oplus \widetilde B_2^{(m)})\oplus (\overline B_2^{(m+1)}+p^m\overline A_2)=C\oplus \widetilde C_2
	$$
	and
	$$
	\widetilde C_1\equiv \widetilde C_2.
	$$
	Therefore we have $(d_1',\dots, d_\ell',a_1',\dots, a_m')\in C\in A_2$ (the same as $(d_1,\dots, d_\ell,a_1,\dots, a_m)\in C\in A_1$ such that 
	$$
	\mathrm{tp}(d_1',\dots, d_\ell',a_1',\dots, a_m')=\mathrm{tp}(d_1,\dots, d_\ell,a_1,\dots, a_m).
	$$
	
	Therefore $A_1$ and $A_2$ are isotypic.
\end{proof}

\begin{remark}
	From~\cite{Fuks}, Theorem 77.3, it follows that any countable reduced separable Abelian $p$-group is a direct sum of cyclic groups. Therefore for countable groups with separable reduced parts  isotypicity coincides with isomorphism.
	
	For non-countable Abelian $p$-groups with separable reduced parts there are many examples of  isotypical and non-isomorphic groups. 
\end{remark}

\subsection{Strong homogeneity}

\begin{definition}
	A model $\mathcal M$ is called \emph{$\omega$-strongly homogeneous}, if for any $a_1,\dots, a_n, b_1,\dots, b_n\in M$ if $\mathrm{tp}\, (a_1,\dots, a_n)=\mathrm{tp}\, (b_1,\dots, b_n)$, then there exists an automorphism $\varphi \in \Aut \mathcal M$ such that $\varphi (a_i)=b_i$, $i=1,\dots, n$.
\end{definition}

From the proof of Theorem~\ref{TheorA} we can derive the following corollary:

\begin{corollary}
	For any prime $p$ any Abelian $p$-group $A$  with a separable reduced part is $\omega$-strongly homogeneous.
\end{corollary}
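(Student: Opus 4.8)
The plan is to extract an automorphism directly from the constructive argument in the proof of Theorem~\ref{TheorA}, applied to the special case $A_1 = A_2 = A$. So suppose $A = D \oplus \overline A$ with $\overline A$ separable, and let $(g_1,\dots,g_n)$, $(g_1',\dots,g_n')$ be two tuples in $A$ with $\mathrm{tp}(g_1,\dots,g_n) = \mathrm{tp}(g_1',\dots,g_n')$. First I would reduce to the standard form: writing $g_i = d_i + a_i$ and $g_i' = d_i' + a_i'$ along the decomposition $A = D \oplus \overline A$, it suffices to build an automorphism carrying the longer tuple $(d_1,\dots,d_n,a_1,\dots,a_n)$ to $(d_1',\dots,d_n',a_1',\dots,a_n')$, since equality of types of the $g$'s together with the type-definability of the coordinates (via the same formulas used in Section~5.1 to recover $n_0$ and the $n_t$) forces equality of types of the two expanded tuples. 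Here I would need the small observation that the projection of $g_i$ onto $D$ is "type-visible" in the sense that the isomorphism type of the finite/divisible summand generated, and the heights and orders of the relevant elements, are all determined by $\mathrm{tp}(g_1,\dots,g_n)$ — this is exactly the content of the discussion preceding Theorem~\ref{TheorA}.

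Next I would invoke the structural decomposition from the proof of Theorem~\ref{TheorA}. The tuple $(d_1,\dots,d_n,a_1,\dots,a_n)$ generates (in the sense made precise in Section~5.1, after replacing each $a_i$ by a height-zero preimage $b_i$) a finite-by-divisible direct summand
$$
C = C_d \oplus C_r \cong \bigoplus_{n_0} \mathbb Z(p^\infty) \oplus \bigoplus_{t=1}^{m}\Bigl(\bigoplus_{n_t}\mathbb Z(p^t)\Bigr),
$$
and by Theorem~\ref{Th_Boyer} together with the computation of the Szmielew invariants in Section~4.1 we may write $A = C \oplus \widetilde C$. Applying the same construction to the primed tuple yields $A = C' \oplus \widetilde C'$ with $C' \cong C$ (the isomorphism matching $d_i \leftrightarrow d_i'$, $b_i \leftrightarrow b_i'$, hence $a_i \leftrightarrow a_i'$, because equality of types pins down all the cardinals $n_0, n_1,\dots, n_m$ and the internal linear relations among the elements). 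The key point is then that $\widetilde C \cong \widetilde C'$: both are direct summands of $A$ complementary to isomorphic copies of $C$, so by a cancellation/uniqueness argument — here one uses that for $p$-groups the complement of a fixed finite summand in a decomposition of the form guaranteed by Theorem~\ref{BasicMaxBounded}/Theorem~\ref{Th_Boyer} is determined up to isomorphism by the invariants of $A$ and of $C$ — the two complements are isomorphic. Splicing the isomorphism $C \to C'$ sending $d_i \mapsto d_i'$, $a_i \mapsto a_i'$ with any isomorphism $\widetilde C \to \widetilde C'$ produces an automorphism $\varphi \in \Aut A$ with $\varphi(d_i) = d_i'$ and $\varphi(a_i) = a_i'$, hence $\varphi(g_i) = g_i'$.

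The main obstacle I anticipate is precisely the cancellation step: concluding $\widetilde C \cong \widetilde C'$ from $A \cong C \oplus \widetilde C \cong C \oplus \widetilde C'$ is not automatic for arbitrary (possibly uncountable) abelian $p$-groups, since cancellation can fail in general. What saves the argument is that $C$ is finite and that $\widetilde C$ is not an arbitrary complement but one of the canonical shape $\widetilde C_d \oplus \widetilde B^{(1)} \oplus \dots \oplus \widetilde B^{(m)} \oplus (\overline B^{(m+1)} + p^m \overline A)$ appearing in the proof of Theorem~\ref{TheorA}; for such complements the isomorphism type is read off from the divisible rank $\varkappa_0$, the basic-subgroup multiplicities $\varkappa_t$, and the final rank / first Ulm data of $A$, all of which are unchanged when we pass from $A$ to $A$ (trivially) and do not depend on the choice of tuple. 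So I would phrase this step as: both $\widetilde C$ and $\widetilde C'$ realize the decomposition of Theorem~\ref{Th_Boyer} relative to the fixed finite summand $C$ inside the fixed group $A$, and therefore are isomorphic by the uniqueness built into that theorem together with the separability of $\overline A$ (which kills the first Ulm subgroup and makes the reduced parts determined by their basic subgroups in the relevant range). The remaining details — checking that the recovered coordinates $d_i, a_i$ really are type-definable, and that the two isomorphisms patch to a group automorphism — are routine.
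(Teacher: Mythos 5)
Your proposal is correct and follows essentially the same route as the paper: reduce to a tuple split along $A=D\oplus\overline{A}$, enclose it in a minimal finite/divisible direct summand whose isomorphism type, together with the placement of the tuple inside it, is determined by the type (orders and heights of all linear combinations), and then extend the resulting partial isomorphism by an isomorphism of the complementary summands. The one place you go beyond the paper --- justifying $\widetilde{C}\cong\widetilde{C}'$ --- addresses a step the paper asserts without comment, but your stated reason (separability making ``the reduced parts determined by their basic subgroups'') is not valid for separable $p$-groups in general; the clean fix is that finite direct summands (and finite-rank divisible summands) cancel in direct sums of abelian groups.
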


\begin{proof}
	Let us take our group~$A$ and $a_1,\dots, a_n, b_1,\dots, b_n\in A$ with $\mathrm{tp}\, (a_1,\dots, a_n)=\mathrm{tp}\, (b_1,\dots, b_n)$. According to the previous considerations we can (without loss of generality) assume, that $a_1,\dots, a_k, b_1,\dots, b_k \in D$, where $D$ is the divisible part of~$A$, and $a_{k+1},\dots, a_n \in A_1$, $b_{k+1},\dots, b_n\in A_2$, where $A_1$ and $A_2$ are reduced and $A=D\oplus A_1=D\oplus A_2$.
	
	Since $\mathrm{tp}\, (a_1,\dots, a_k)=\mathrm{tp}\, (b_1,\dots, b_k)$, these elements are contained in isomorphic minimal direct summands of~$D$: 
	$$
	D_1\cong D_2\cong \bigoplus\limits_{\ell} \mathbb Z(p^\infty).
	$$
	In these groups $a_1,\dots, a_k$ and $b_1,\dots, b_k$ are such elements that for any $\alpha_1,\dots, \alpha_k\in \mathbb Z$ the linear combinations $\alpha_1 a_1+\dots +\alpha_k a_k$ and $\alpha_1 b_1+\dots +\alpha_k b_k$ have the same orders. Therefore there exists an isomorphism $\varphi_1: D_1\to D_2$ such that $\varphi_1 (a_i)=b_i$ for all $i=1,\dots, k$. 
	
	Since $D=D_1\oplus D_1'=D_2\oplus D_2'$, where $D_1' \cong D_2'$, the isomorphism $\varphi$ can be extended up to an automorphism $\varphi_2\in \Aut D$.
	
	Now let us consider the elements $a_{k+1},\dots ,a_n\in A_1$ and $b_{k+1},\dots, b_n\in A_2$. Since  $\mathrm{tp}\, (a_{k+1},\dots, a_n)=\mathrm{tp}\, (b_{k+1},\dots, b_n)$, these elements are contained in isomorphic minimal direct summands $B_1$ of~$A_1$ and $B_2$~$A_2$, respectively. 
	
	By the same reasons, since for any $\alpha_{k+1},\dots, \alpha_n\in \mathbb Z$ the  linear combinations $\alpha_{k+1} a_{k+1}+\dots +\alpha_n a_n$ and $\alpha_1 b_1+\dots +\alpha_k b_k$ have the same orders and heights, there exists an isomorphism $\varphi_3: B_1\to B_2$ such that $\varphi_3 (a_i)=b_i$ for all $i=k+1,\dots, n.$ 
	
	Since $A_1=B_1\oplus C_1$ and $A_2=B_2\oplus C_2$, where $C_1\cong C_2$, we can extend $\varphi_3$ up to an isomorphism $\varphi_4: A_1\to A_2$. 
	
	Since $A=D\oplus A_1=D\oplus A_2$, the mappings $\varphi_2$ and $\varphi_4$ give us the required automorphism~$\varphi$.
\end{proof}

In next sections we will consider the general case of periodic Abelian groups.

\section{Types of elements in periodic Abelian groups}\leavevmode

\subsection{Formulas in Abelian groups}\leavevmode

The most important for us are the following two statements about elimination of quantifiers in Abelian groups:

\begin{lemma}[\cite{Mya2}, Lemma A.2.1, Pr\"ufer]\label{LemmaA21}
	Every positive primitive  formula in the language of Abelian groups  is equivalent modulo $T_{ab}$ (theory of Abelian groups) to a finite conjunction of formulas of the forms $\alpha_1 x_1+\dots +\alpha_k x_k=0$ and and  $p^n |  \alpha_1 x_1+\dots +\alpha_k x_k$, where $\alpha_1,\dots, \alpha_k\in \mathbb Z$,  $p$ is primes and $n$ is a natural number. 
\end{lemma}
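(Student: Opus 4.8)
The plan is to recognise the quantifier‑free kernel of a positive primitive formula as a system of linear Diophantine equations, bring it to Smith normal form, and then read off the equivalent conjunction. Concretely, I would first write the given pp‑formula as $\varphi(x_1,\dots,x_n)=\exists y_1\dots\exists y_m\,\bigl(P\bar x+Q\bar y=\bar 0\bigr)$, where $\bar x,\bar y$ are the tuples of free and bound variables and $P\in\mathbb Z^{k\times n}$, $Q\in\mathbb Z^{k\times m}$ are the integer coefficient matrices of the $k$ atomic formulas in the kernel — recall that in the language of Abelian groups every atomic formula is a linear equation $\sum a_i x_i+\sum b_j y_j=0$, so $\varphi$ simply asserts $\exists\bar y\,(Q\bar y=-P\bar x)$.

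Next I would choose $U\in\mathrm{GL}_k(\mathbb Z)$ and $V\in\mathrm{GL}_m(\mathbb Z)$ realising the Smith normal form $UQV=D$, where $D$ is the rectangular diagonal matrix with diagonal entries $d_1,\dots,d_s$ ($s=\min(k,m)$, which we may take nonnegative), some possibly $0$. Since $V$ is invertible over $\mathbb Z$, the substitution $\bar y=V\bar z$ is a bijective $\mathbb Z$‑linear change of the bound variables in every Abelian group, and left multiplication of the system by the invertible integer matrix $U$ is an equivalence; hence $T_{ab}\vdash\varphi\leftrightarrow\exists\bar z\,(D\bar z=\bar c)$, where $\bar c:=-UP\bar x$ and each coordinate $c_i$ of $\bar c$ is an integer linear form in $x_1,\dots,x_n$ because $U$ and $P$ have integer entries. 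Now $D\bar z=\bar c$ is decoupled into single‑variable equations $d_i z_i=c_i$ for $i\le s$ and variable‑free equations $0=c_i$ for $s<i\le k$, so
\[
\varphi\ \longleftrightarrow\ \bigwedge_{s<i\le k}(c_i=0)\ \wedge\ \bigwedge_{i\le s,\ d_i=0}(c_i=0)\ \wedge\ \bigwedge_{i\le s,\ d_i\ne0}\exists z_i\,(d_i z_i=c_i),
\]
and $\exists z_i\,(d_i z_i=c_i)$ is exactly the divisibility atom $d_i\mid c_i$ by the definition of divisibility in a group. Finally, factoring $|d_i|=\prod_p p^{e_p}$ and using that $d_i\mid c_i$ is equivalent, in any Abelian group, to $\bigwedge_p\, p^{e_p}\mid c_i$ (the Chinese Remainder Theorem for modules), and noting that each $c_i$ is precisely a form $\alpha_1 x_1+\dots+\alpha_n x_n$, one gets the asserted finite conjunction of equations $\alpha_1 x_1+\dots+\alpha_n x_n=0$ and prime‑power divisibilities $p^n\mid \alpha_1 x_1+\dots+\alpha_n x_n$.

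Once Smith normal form is invoked the argument is essentially bookkeeping, so the only real point to be careful about — and the step I would write out in most detail — is that every manipulation is uniform over \emph{all} Abelian groups: the substitution $\bar y=V\bar z$ and the left multiplication by $U$ are genuine $T_{ab}$‑equivalences precisely because $U$ and $V$ induce mutually inverse $\mathbb Z$‑linear bijections, and the passage from $d_i\mid c_i$ to prime powers must be read as a statement about divisibility in groups, not merely in $\mathbb Z$. If one prefers to avoid Smith normal form, the same result follows by eliminating the existential quantifiers one at a time from a conjunction of equations and divisibility conditions: either some equation involves $y$ with a nonzero coefficient $a$, in which case multiplying the remaining atoms through by $a$ turns them into atoms in $\bar x$ alone and leaves only the constraint $a\mid \ell(\bar x)$, or $y$ occurs only inside divisibility atoms, which one resolves by the Chinese Remainder Theorem; this route is more computational but elementary, and I would keep it in reserve.
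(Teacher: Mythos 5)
Your argument is correct and complete: reducing the existentially quantified kernel $Q\bar y=-P\bar x$ to Smith normal form, using that $U$ and $V$ induce $T_{ab}$-uniform equivalences, and then splitting each $d_i\mid c_i$ into prime-power divisibilities via B\'ezout is exactly the standard proof of Pr\"ufer's lemma. The paper itself gives no proof here --- it cites the result from Hodges (Lemma A.2.1), where essentially this matrix-reduction argument appears --- so there is nothing to reconcile; the only caveat is that your ``reserve'' one-variable-at-a-time elimination is sketchier (resolving $\exists y\,\bigwedge_i n_i\mid(t_i+m_iy)$ in an arbitrary Abelian group needs more care than the Chinese Remainder Theorem alone), but since the Smith normal form route already settles the lemma, that does not affect correctness.
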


\begin{proposition}[\cite{Baur}, Baur]
Every formula in the language of Abelian groups is equivalent relative to the theory of abelian groups to a boolean combination of $\forall \exists$-sentences and positive primitive formulas from Lemma~\ref{LemmaA21}.
\end{proposition}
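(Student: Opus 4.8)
The plan is to prove this essentially as the Baur--Monk quantifier-reduction theorem, by induction on the structure of the formula, reducing everything to a single combinatorial fact about covering a group by cosets of subgroups. Call a formula \emph{good} if it is equivalent modulo $T_{ab}$ to a boolean combination of positive primitive formulas (which, by Lemma~\ref{LemmaA21}, may then be put into the explicit form of conjunctions of $\sum\alpha_i x_i=0$ and $p^n\mid\sum\alpha_i x_i$) and $\forall\exists$-sentences. Every atomic formula $\sum_i k_i x_i=0$ is positive primitive, hence good; the good formulas are closed under $\wedge,\vee,\neg$ by definition; and since $\forall y\,\varphi\equiv\neg\exists y\,\neg\varphi$, the whole statement reduces to showing that $\exists y\,\varphi(\bar x,y)$ is good whenever $\varphi(\bar x,y)$ is. Putting $\varphi$ in disjunctive normal form over positive primitive formulas and sentences, pulling the parameter-free sentence-literals out of the scope of $\exists y$, distributing $\exists y$ over $\vee$, and absorbing each negated positive primitive literal $\neg\rho'(\bar x,y)$ that occurs alongside the positive part $\psi$ into $\neg(\rho'\wedge\psi)$, one is reduced to treating a single formula
\[
\exists y\,\Bigl(\psi(\bar x,y)\wedge\bigwedge_{j=1}^{m}\neg\eta_j(\bar x,y)\Bigr),\qquad \psi,\eta_1,\dots,\eta_m\ \text{positive primitive},\ \ \eta_j\vdash\psi .
\]

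The key structural fact is that for any abelian group $M$ and any tuple $\bar a$ from it, $\psi(\bar a,M):=\{y\in M: M\models\psi(\bar a,y)\}$ is empty or a coset of the subgroup $G_\psi:=\psi(\bar 0,M)$, and each $\eta_j(\bar a,M)$ is empty or a coset of $G_{\eta_j}:=\eta_j(\bar 0,M)\leqslant G_\psi$, with $G_\psi$ and the $G_{\eta_j}$ independent of $\bar a$. Hence the displayed formula holds at $\bar a$ iff $\psi(\bar a,M)\neq\emptyset$ --- the positive primitive condition $\exists y\,\psi(\bar x,y)$ --- and the coset $\psi(\bar a,M)$ is \emph{not} contained in the union of the nonempty ones among $\eta_1(\bar a,M),\dots,\eta_m(\bar a,M)$. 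By B.~H.~Neumann's lemma on coverings of a group by cosets of subgroups, together with its quantitative refinement, such a coset is covered iff it is covered using only cosets of subgroups $G_{\eta_j}$ of index at most a bound $f(m)$ depending only on $m$, and whether it is then covered is a finite computation in the finite quotient $G_\psi/\bigcap_j G_{\eta_j}$ which, by inclusion--exclusion, depends only on: (i) for each $T\subseteq\{1,\dots,m\}$, whether $\bigcap_{j\in T}\eta_j(\bar a,M)\neq\emptyset$ --- the positive primitive condition $\exists y\bigwedge_{j\in T}\eta_j(\bar x,y)$; and (ii) the indices $[G_\psi:\bigcap_{j\in T}G_{\eta_j}]$, each of which is a concrete number $\leqslant f(m)^m$ or exceeds it, where ``$[\phi_1(M):\phi_2(M)]\geqslant n$'' for positive primitive $\phi_2\vdash\phi_1$ is the $\exists$-sentence asserting the existence of $n$ realisations of $\phi_1$ that are pairwise inequivalent modulo $\phi_2$. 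Consequently ``not covered'', and hence the displayed formula, is a boolean combination of positive primitive formulas and sentences that are $\exists$- or $\forall$-sentences, so a boolean combination of positive primitive formulas and $\forall\exists$-sentences; applying Lemma~\ref{LemmaA21} to each positive primitive subformula then yields the form stated.

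The main obstacle is this last step: converting ``the coset $\psi(\bar a,M)$ is not covered by the union of the $\eta_j(\bar a,M)$'' into an explicit \emph{finite} boolean combination of positive primitive conditions on $\bar x$ and invariant sentences. It requires (a) the quantitative form of Neumann's lemma, so that only cosets of boundedly-indexed subgroups are relevant and therefore only finitely many ``$[\phi_1:\phi_2]\geqslant n$'' sentences occur --- crucially avoiding the non-first-order assertion that some index is infinite, which is handled instead by the ``exceeds the bound'' case; (b) the inclusion--exclusion count in the finite group $G_\psi/\bigcap_j G_{\eta_j}$, using that $\bigcap_{j\in T}\eta_j(\bar a,M)$ has cardinality $0$ or $[\bigcap_{j\in T}G_{\eta_j}:\bigcap_j G_{\eta_j}]$; and (c) checking that every quantity entering this count is controlled, uniformly over $M\models T_{ab}$, either by a positive primitive formula in $\bar x$ or by one of the index sentences. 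Everything else --- the inductive bookkeeping over formula complexity and the coset description of the solution sets of positive primitive formulas --- is routine.
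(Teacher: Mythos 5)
This proposition is quoted from Baur's 1976 paper and is not proved in the present text, and your argument is a correct reconstruction of the standard Baur--Monk proof: reduce the quantifier step to eliminating $\exists y$ from one positive primitive formula conjoined with negated positive primitive formulas implying it, use that pp solution sets are cosets of parameter-independent subgroups, and apply the quantitative form of B.~H.~Neumann's covering lemma with inclusion--exclusion to express ``not covered'' through pp conditions on $\bar x$ and index sentences, which (being $\exists\forall$, hence negations of $\forall\exists$-sentences) fit into the required boolean combination. This is essentially the approach of the cited source, so nothing further is needed.
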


According to these two statements we have the following situation: 
the type of any tuple $(a_1,\dots, a_m)\in A$ (here $A$ is any Abelian group, not necessarily periodic) contains two parts: 

(1) Elementary theory of~$A$;

(2) All formulas
$$
\alpha_1 a_1+\dots +\alpha_m a_m=0,\quad p^n | \alpha_1 a_1+\dots +\alpha_m a_m,
$$
with $\alpha_1,\dots, \alpha_m\in \mathbb Z$, $p$ prime and $n$ natural, that hold in~$A$.

For a $p$-group $A$ it could be written shorter:

(1) Elemenary theory of~$A$;

(2) All formulas
$$
o(\alpha_1 a_1+\dots +\alpha_m a_m)=o_{\alpha_1,\dots, \alpha_m}=p^k,\quad h( \alpha_1 a_1+\dots +\alpha_m a_m)=h_{\alpha_1,\dots, \alpha_m}=p^n,
$$
with different $\alpha_1,\dots, \alpha_m\in \mathbb Z$.

In other words, for any Abelian $p$-group $A$ possible types of elements describe the elementary theory of~$A$, and orders and heights of linear combinations of the corresponding elements.
All orders are different powers of~$p$, all heights are also different powers of~$p$, or $\infty$.

\subsection{Reduction of types over periodic Abelian groups to $p$-groups}\leavevmode

Assume that $A$ is a periodic Abelian group and $A=\bigoplus\limits_{p\text{ is prime}} A_p$, where  every $A_{p}$ is a $p$-group.

If $\mathrm{tp}\, (A)= \mathrm{tp}\, (A')$, then clearly for all prime~$p$ we have $\mathrm{tp}\, (A_p) = \mathrm{tp}\, (A_p')$, because:

(1) If $\mathrm{tp}\, (A)= \mathrm{tp}\, (A')$, then $A\equiv A'$, then $A_p\equiv A_p'$, so these $p$-components $A_p$ and $A_p'$ are elementarily equivalent.

(2) If $o( \alpha_1 g_1+\dots +\alpha_m g_m)=p^k$ for $g_1,\dots, g_m\in A_p$ in the whole group~$A$, then $o( \alpha_1 g_1+\dots +\alpha_m g_m)=p^k$ in~$A_p$.

(3) If $h( \alpha_1 g_1+\dots +\alpha_m g_m)=p^k$ or~$\infty$ for $g_1,\dots, g_m\in A_p$ in the whole group~$A$, then $h( \alpha_1 g_1+\dots +\alpha_m g_m)=p^k$ or $\infty$ in~$A_p$ (since $A_p$ is pure in~$A$).

\medskip

Conversly, if for all prime~$p$ we have $\mathrm{tp}\, (A_p) = \mathrm{tp}\, (A_p')$, then  $\mathrm{tp}\, (A)=\mathrm{tp}\, (A')$, because:

(1) If $\mathrm{tp}\, (A_p)= \mathrm{tp}\, (A_p') $ for all prime $p$, then $A_p\equiv A_p'$ for all prime~$p$, then $A\equiv A'$, so the whole groups $A$ and $A'$ are elementarily eqivalent.

(2) If $a\in A$ and $a=a_1+\dots +a_m$, where $a_i \in A_{p_i}$, all $p_1,\dots, p_m$ are distinct, then $o(a)=o(a_1)\dots o(a_m)$.

(3) If $a\in A$ and $a=a_1+\dots +a_m$, where $a_i \in A_{p_i}$, all $p_1,\dots, p_m$ are distinct, $h(a_i)=p_i^{k_i}$, then $p_i$-height of~$a$  is precisely~$p_i^{k_i}$ for all $p_i$, and $q$-height of~$a$ is infinite for all $q\ne p_1,\dots, p_m$.

So we see that two Abelian periodic groups are  equivalent by types if and only if their $p$-components are equivalent by types for all prime~$p$.

Therefore from this moment we will assume that our groups $A$ and $A'$ are $p$-groups.

\subsection{Sums of divisible and bounded $p$-groups}\leavevmode

If
 $A=D\oplus B$, where $D$ is divisible and $B=\bigoplus\limits_{n=1}^N B_n$ is  bounded, then elementary theory of~$A$ gives us $\mathrm{rank}\, D=\varkappa_0$ and $\mathrm{rank}\, B_n=\varkappa_n$ for all $n=1,\dots, N$.
 
This case is simple: every element $g\in A$ is a sum of its projections to $D, B_1,\dots, B_N$ and types of elements of these groups are absolutely defined by their elementary types. 

Therefore in this case two $p$-groups are isotypically equivalent if and only if they are elementary equivalent.

\subsection{Abelian $p$-groups with unbounded basic subgroups}\leavevmode

We remember that if $A=D\oplus \overline A$ is a decomposition of~$A$ into divisible and reduced parts and $B$ is a basic subgroup of~$\overline A$, then if $B$ is unbounded, then elementary theory of~$A$ gives us only the ranks $\varkappa_n$ of~$B_n$, $n=1,2,\dots$.

Consider the first Ulm subgroup $A^{\mathbf 1}$ of~$\overline A$ and assume that its basic subgroup is
$$
C=\bigoplus_{k=1}^\infty C_k,\quad C_k\cong \bigoplus_{\gamma_k} \mathbb Z(p^k).
$$

Now we can formulate the main classification theorem for isotypical equivalence of Abelian $p$-groups.

\begin{theorem}
	If two Abelian $p$-groups $A$ and $A'$ have $\varkappa_0$ and $\varkappa_0'$ as ranks of their divisible parts, $\varkappa_i, \varkappa_i'$, $i=1,\dots, n,\dots$ as ranks of the corresponding direct summands $B_i,B_i'$ of their basic subgroups $B,B'$, and $\gamma_i, \gamma_i'$, $i=1,\dots, n,\dots$, as ranks of the corresponding direct summands $C_i,C_i'$ of the basic subgroups $C,C'$ of their first Ulm subgroups, then 
	$$
	\mathrm{tp}(A) = \mathrm{tp} (A')
	$$
	if and only if 
	$$
	\varkappa_i=\varkappa_i'\text{ for  }i=1,2,\dots, n,\dots,
	$$
	and either
	$$
		\varkappa_0+\mathrm{fin}\, r(A^{\mathbf 1}), 	\varkappa_0'+\mathrm{fin}\, r({A'}^{\mathbf 1})=\infty,
$$
	or   
	\begin{align*}
		&	\varkappa_0= \varkappa_0'\text{ are finite},\\
		&\mathrm{fin}\, r(A^{\mathbf 1})= \mathrm{fin}\, r({A'}^{\mathbf 1})=0,\\
		&	\sum_{k=i}^N \gamma_k = \sum_{k=i}^N \gamma_k',\quad i=1,\dots, N,\text{ where } \gamma_k=\gamma_k'= 0 \text{ for all } k>N.
	\end{align*}
	Here by $\mu = \nu$ we mean that either these cardinals are finite and coincide, or they are both infinite.
\end{theorem}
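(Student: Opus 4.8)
\medskip\noindent\textbf{Proof proposal.}
The plan rests on the quantifier–elimination reduction: by Lemma~\ref{LemmaA21} and Baur's theorem, $\mathrm{tp}^A(g_1,\dots,g_m)$ in an Abelian $p$-group $A$ is determined by $\mathrm{Th}(A)$ together with the orders and the $p$-heights $h(\alpha_1g_1+\dots+\alpha_mg_m)\in\{0,1,2,\dots,\infty\}$ of all integral linear combinations, while $\mathrm{Th}(A)$ is itself governed (Theorem~\ref{EE-Abelian}) by the Ulm invariants $\varkappa_i=U(p,i-1;A)$, $i\geqslant 1$, together with $D(p;A)$ and $Tf(p;A)=\mathrm{fin}\, r(B)$; the latter two are forced once the $\varkappa_i$ agree --- they equal $\infty$ when $B$ is unbounded, and when $B$ is bounded one has $\overline A=B$ and is back in the situation of the preceding subsection. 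Since the empty tuple realizes $\mathrm{Th}(A)$, the hypothesis $\mathrm{tp}(A)=\mathrm{tp}(A')$ forces $A\equiv A'$ and hence $\varkappa_i=\varkappa_i'$ for all $i\geqslant 1$; so both directions reduce to recovering, or exploiting, the remaining data.

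For the direction ``$\Rightarrow$'', the remaining invariants are read off $\mathrm{tp}(A)$ through the subgroup $H:=D\oplus A^{\mathbf 1}$ of elements of infinite $p$-height: since $p^rH=D\oplus p^rA^{\mathbf 1}$, the cardinals $\dim(p^rH)[p]=\varkappa_0+\dim(p^rA^{\mathbf 1})[p]$ form a non-increasing sequence in $r$ with limit $\varkappa_0+\mathrm{fin}\, r(A^{\mathbf 1})$. For fixed $n,r$ let $\sigma_{n,r}$ be the partial type of a tuple $(d_1,\dots,d_n,a_1,\dots,a_n)$ stating that $d_1,\dots,d_n$ are independent of order $p$, that $p^ra_t=d_t$, and that each $d_t$ and $a_t$ has infinite height; then $\sigma_{n,r}$ is realized in $A$ precisely when $\dim(p^rH)[p]\geqslant n$, and if $\bar g$ realizes $\sigma_{n,r}$ in $A$ its complete type, lying in $\mathrm{tp}(A)=\mathrm{tp}(A')$, is realized in $A'$. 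Hence ``$\sigma_{n,r}$ realized for all $n$ and $r$'' is equivalent to $\varkappa_0+\mathrm{fin}\, r(A^{\mathbf 1})=\infty$, so the choice of alternative is an invariant of $\mathrm{tp}(A)$; and in the second alternative (where $A^{\mathbf 1}$ is bounded, as $\mathrm{fin}\, r$ is $0$ or $\infty$ for a reduced $p$-group, and $\varkappa_0$ is finite) one recovers $\varkappa_0=\max\{n:\sigma_{n,r}\text{ realized for all }r\}$, and $\varkappa_0+\sum_{k\geqslant i}\gamma_k=\varkappa_0+\dim(p^{i-1}A^{\mathbf 1})[p]=\max\{n:\sigma_{n,i-1}\text{ realized}\}$; subtracting the finite value $\varkappa_0$ yields the tail sums $\sum_{k=i}^{N}\gamma_k$. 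All these quantities depend on $\mathrm{tp}(A)$ alone, hence coincide for $A$ and $A'$.

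For ``$\Leftarrow$'', given a tuple in $A$ I would produce one of equal type in $A'$, following the pattern of the proof of Theorem~\ref{TheorA}: after the standard reduction the tuple is $(d_1,\dots,d_\ell,a_1,\dots,a_m)$ with $d_i\in D$, $a_i\in\overline A$, and applying Boyer's theorem (Theorem~\ref{Th_Boyer}) with $n$ exceeding every finite height occurring among the linear combinations of the $a_i$ puts the ``finite-height'' data of the tuple into the bounded summand $B_1\oplus\dots\oplus B_n$, with $A^{\mathbf 1}\subseteq B_n^*+p^n\overline A$. One then builds the $a_i'$ in $\overline{A'}$ greedily: a generator contributing only finite heights is realized inside the separable layer of $\overline{A'}$ exactly as in Theorem~\ref{TheorA}, using $\varkappa_i=\varkappa_i'$ to guarantee enough independent elements of each prescribed order and height; a generator $c$ whose multiples pass into $A^{\mathbf 1}$, say $p^lc\in A^{\mathbf 1}$ but $p^{l-1}c\notin A^{\mathbf 1}$, needs --- beyond its finite heights $h(c),\dots,h(p^{l-1}c)$ --- only an element of ${A'}^{\mathbf 1}$ of order $o(p^lc)$ to serve as $p^lc'$, and the existence of an element of a given order in $A^{\mathbf 1}$ is controlled precisely by the tail sums $\sum_{k\geqslant i}\gamma_k$, which the hypotheses match (in the first alternative every such demand is met because $D'$ has infinite rank or ${A'}^{\mathbf 1}$ is unbounded, whence $D'\oplus{A'}^{\mathbf 1}$ contains every finite $p$-group). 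The $d_i$ are placed into $D'\oplus{A'}^{\mathbf 1}$ using $\varkappa_0=\varkappa_0'$ (second alternative) or the same ``room'' (first alternative). Carrying this out so that the chosen images span an internal direct sum inside $A'$ gives a tuple of the same type; by symmetry $\mathrm{tp}(A)=\mathrm{tp}(A')$.

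The main obstacle is the sufficiency direction, and within it the \emph{simultaneous} realization: one must embed the finite-height part of the tuple into the separable layer of $\overline{A'}$, route the Ulm part genuinely into ${A'}^{\mathbf 1}$ with the correct orders, place the divisible part in $D'$, keep all these images in an internal direct sum, and ensure that the finite-height generators acquire \emph{exactly} their prescribed finite heights while the ``transitioning'' generators really do pass into ${A'}^{\mathbf 1}$ rather than merely attain large finite height. This is where the Ulm structure of $\overline A$ truly enters and where one needs purification arguments for finite (valuated) subgroups of $p$-groups of Ulm length $\leqslant 2$ together with the computations preceding this theorem: the $\varkappa_i$ handle the separable layer, while $\varkappa_0$ and the tail sums $\sum_{k\geqslant i}\gamma_k$ --- or their being infinite, in the first alternative --- are exactly what the Ulm and divisible layers require. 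The necessity direction is, by comparison, routine bookkeeping once the partial types $\sigma_{n,r}$ are written down.
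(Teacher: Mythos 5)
Your proposal follows essentially the same route as the paper: the quantifier-elimination reduction of types to orders and $p$-heights of linear combinations, recovery of $\varkappa_0$ and the tail sums $\sum_{k\geqslant i}\gamma_k$ from types of independent infinite-height tuples (your partial types $\sigma_{n,r}$ are just the paper's count of independent elements of order $p^i$ and infinite height, repackaged via $\dim\bigl(p^{r}(D\oplus A^{\mathbf 1})\bigr)[p]$), and, for sufficiency, the same normalization of a tuple into finite-height, transitioning, and purely-Ulm generators realized in $A'$ by gluing an infinite-height preimage to a height-zero element of the basic subgroup. The technical points you flag as remaining (exact heights, internal direct sums) are handled at the same level of detail in the paper's own Steps 2--4, so the proposal is correct and matches the paper's argument.
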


\begin{proof}
{\bf Step 1.} \emph{If two Abelian $p$-groups are isotypically equivalent, then their invariant coincide}.

	Assume first that $A$ and $A'$ are isotypically equivalent. Since they are elementarily equivalent, we have $\varkappa_i=\varkappa_i'$ for all $i\in \mathbb N$. 
	
	Now let us take $N$ indepent elements $a_1,\dots, a_n\in A$ of infinite heights and of orders~$p^n$. Elements of orders~$p^n$ are independent, if all their non-zero linear combinations, where not all coefficients are divided by~$p$, have the order~$p^n$. Therefore the full type of these $a_1,\dots, a_N$ consists of three things:
	
	(1) Elementary theory of~$A$;
	
	(2) Orders of every $a_i$ and their independency;
	
	(3) Heights of all $a_1,\dots, a_N$ and their linear combinations are infinite.
	
	If the divisible part of~$A$ has an infinite rank, then we can find such $a_1,\dots, a_N$ for all natural $n$ and~$N$: it is enough to take elements $a_i$ of the order~$p^n$ from different direct summands of~$D$.  If the final rank of~$A^{\mathbf 1}$ is infinite, we also can find such $a_1,\dots, a_N$ for all natural $n$ and~$N$: it is enough to take $a_i=p^{m_i-n}b_i$, where $b_i$ are generators of different direct summands of~$C$ of the orders~$m_i\geqslant n$.
	
	Therefore if $\varkappa_0+\mathrm{fin}\, r(A^{\mathbf 1})=\infty$, then for any $n, N \in \mathbb N$ we can find in~$A$ (and therefore in~$A'$) $N$ independent $a_1,\dots, a_N$ of orders~$p^n$ and infinite heights. If $A'$ has a divisible part of finite rank and bounded basic subgroup of~${A'}^{\mathbf 1}$, then there exists some $N_0\in \mathbb N$ and some $n_0\in \mathbb N$ such  that in~$A'$ it is impossible to find $N_0$ independent elements of infinite heights and orders~$p^{n_0}$. 
	
	Consequently, if $\varkappa_0+\mathrm{fin}\, r(A^{\mathbf 1})=\infty$ and $A\equiv_{tp} A'$, then $\varkappa_0'+\mathrm{fin}\, r({A'}^{\mathbf 1})=\infty$.
	
	\medskip
	
	Now let us assume that $\varkappa_0+\mathrm{fin}\, r(A^{\mathbf 1})< \infty$ and $A\equiv_{tp} A'$, then $\varkappa_0'+\mathrm{fin}\, r({A'}^{\mathbf 1})<\infty$. It means that $\varkappa_0,\varkappa_0'< \infty$ and $A^{\mathbf 1}, {A'}^{\mathbf 1}$ are bounded, i.\,e., these groups are direct sums of cyclic groups:
	$$
	A^{\mathbf 1}=\bigoplus_{i=1}^N C_i,\qquad {A'}^{\mathbf 1}=\bigoplus_{i=1}^N C_i'.
	$$
	In this case in~$A$ there exist maximum $\varkappa_0$ independent elements $a_1,\dots, a_{\varkappa_0}$ of orders~$p^{N+1}$ and infinite heights, and in~$A'$ there exist maximum $\varkappa_0'$ independent elements $a_1',\dots, a_{\varkappa_0}'$ of orders~$p^{N+1}$ and infinite heights. It means that $\varkappa_0=\varkappa_0'$.
	
	If we take the maximal set of independent elements of the order $p^N$ and infinite height, this set in~$A$ consists of $\varkappa_0+\gamma_N$ elements and in~$A'$ of $\varkappa_0'+\gamma_N'$ elements. Therefore $\varkappa_0+\gamma_N=\varkappa_0'+\gamma_N'$. 
	
	Similarly the maximal number of  elements of the order $p^i$, $i< N$, in~$A$ consists of $\varkappa_0+\gamma_i+\gamma_{i+1}+\dots +\gamma_N$ elements and in~$A'$ of $\varkappa_0'+\gamma_i'+\gamma_{i+1}'+\dots+\gamma_N'$ elements. Since $\varkappa_0=\varkappa_0'$ are the same finite numbers, we have 
	$$
\sum_{k=i}^N \gamma_k = \sum_{k=i}^N \gamma_k',\quad i=1,\dots, N.
$$

We proved that if two Abelian $p$-groups are isotypically equivalent, then the corresponding invariants coincide.

\medskip

Now let us assume that two Abelian $p$-groups $A$ and $A'$ with unbounded basic subgroups have the same invariants from the theorem. 

\bigskip

{\bf Step 2.}  \emph{Extension of tuples of elements}.

\begin{lemma}
	If for two Abelian groups $A$ and $A'$ for any $a_1,\dots, a_n\in A$ there exist $b_1,\dots, b_N\in A$ such that $a_1,\dots, a_n$ are linear combinations of $b_1,\dots, b_N$ and there exist $c_1,\dots, c_N\in A'$ such that 
	$$
	tp^A (b_1,\dots, b_N)=tp^{A'} (c_1,\dots, c_N)
	$$
	and the same for any $a_1',\dots, a_n'\in A'$, then $A$ and $A'$ are isotypically equivalent.
\end{lemma}

\begin{proof}
	Clear.
\end{proof}

According to this lemma we can use instead of the type of any tuple $a_1,\dots, a_n$ the type of a tuple $b_1,\dots, b_N$, for which all $a_i$ are linear combinations of $b_1,\dots, b_N$.

\medskip

{\bf Step 3.} \emph{Types of extended tuples.}

\begin{lemma}\label{lemma_type}
	Let $a_1,\dots, a_n$ be a tuple of elements of an Abelian $p$-group~$A$. Then there exists a tuple $b_1,\dots, b_N$ of~$A$ such that $a_1,\dots, a_n\in \langle b_1,\dots, b_N\rangle$ and 
	
	\medskip
	
	\emph{(1)} $b_1,\dots, b_N$ are independent elements of the orders $p^{\ell_1},\dots, p^{\ell_N}$;
	
	\smallskip
	
	\emph{(2)} $b_1,\dots, b_r$ have $p$-heghts~$0$ and all their non-trivial linear cobinations have finite heights (and therefore they can be embedded into a finite direct summand of~$A$);
	
	\smallskip
	
	\emph{(3)} $b_{r+1},\dots , b_{r+s}$ have $p$-heights~$0$ and $p^{m_1},\dots, p^{m_s}$, where $m_1< \ell_{r+1}$, \dots  , $m_s< \ell_{r+s}$, are minimal natural numbers such that the elements $p^{m_1} b_{r+1},\dots, p^{m_s}b_{r+s}$ has infinite heights;
	
	\smallskip
	
	\emph{(4)} elements $p^{m_1} b_{r+1},\dots, p^{m_s}b_{r+s}, b_{r+s+1},\dots, b_N$ are independent elements of infinite $p$-heights.
\end{lemma}

\begin{proof}
Let us take all elements from the set $a_1,\dots, a_n$ of finite heights and find the corresponding elements $c_i$ of heights~$0$ such that $a_i=p^k c_i$. 	

After this action we can assume that all initial $a_1,\dots, a_n$ have either $p$-height~$0$, or infinite $p$-height.

Let us generate the subgroup $F=\langle a_1,\dots, a_n\rangle \subset A$, which is of course a finite Abelian group. Let us denote by $F_\infty$ the subgroup of~$F$ consisting of all elements of infinite heights. 

Having a finite Abelian group $F$ and some its subgroup $F_\infty$ we know that this group is a direct sum of cyclic groups $\mathbb Z(p^k)$ (for different natural~$k$), where some generators coincide with generators of~$F_\infty$ and some generators of~$F_\infty$ are generators of~$F$ multiplied by powers of~$p$.

Let us take this set of generators. In the case, if some generator~$c$ of the group~$F$ has a height $>0$ in the whole group~$A$, we can take the corresponding $b$ of height~$0$ such that $c=p^k\cdot b$, and use this $b$ instead of~$c$.

Finally we will come to the set $b_1,\dots, b_N$ with the properties stated in the lemma. 

\end{proof}
\medskip

{\bf Step 4.} \emph{If two Abelian $p$-groups have same invariants, then they are isotypically equivalent}.

\medskip

If we want to prove that $A$ and $A'$ are isotypically equivalent, then we have to consider any tuple $a_1,\dots, a_n\in A$ and find a tuple $a_1',\dots, a_n'\in A'$ such that $tp^A (a_1,\dots, a_n)=tp^{A'}(a_1',\dots, a_n')$.

In reality we can take the corresponding tuple  $b_1,\dots, b_N\in A$ from Lemma~\ref{lemma_type} and find a tuple $b_1',\dots, b_N'\in A'$ such that $tp^A (b_1,\dots, b_N) = tp^{A'} (b_1',\dots, b_N')$.

Since we know that in the group~$A$ the elements  $p^{m_1} b_{r+1},\dots, p^{m_s}b_{r+s}, b_{r+s+1},\dots, b_N$ are independent elements of infinite $p$-heights, then we know that in the Ulm subgroup $D\oplus A^{\mathbf 1}$ there exist independent direct summands of the orders 
$$
p^{\ell_{r+1}-m_1},\dots, p^{\ell_{r+s}- m_s}, p^{\ell_{r+s+1}},\dots, p^{\ell_N}.
$$
This knowelge gives us some conditions to the cardinalities $\gamma_1,\dots, \gamma_i,\dots$: if the number of $p^k$ in this sequence is $\mu_k$, $k=1,\dots, M$,  then  
\begin{align*}
	\sum_{i=1}^\infty \gamma_i &\geqslant \mu_1+\dots + \mu_M,\\
	\sum_{i=2}^\infty \gamma_i &\geqslant \mu_2+\dots+\mu_M,\\
	\dots\dots&\dots\dots\dots \dots \dots  \dots\\
	\sum_{i=M-1}^\infty \gamma_i&\geqslant \mu_{M-1}+\mu_M,\\
	\sum_{i=M}^\infty \gamma_i&\geqslant \mu_M,
\end{align*}
and therefore
\begin{align*}
	\sum_{i=1}^\infty \gamma'_i &\geqslant \mu_1+\dots + \mu_M,\\
	\sum_{i=2}^\infty \gamma'_i &\geqslant \mu_2+\dots+\mu_M,\\
	\dots\dots&\dots\dots\dots \dots  \dots \dots\\
	\sum_{i=M-1}^\infty \gamma'_i&\geqslant \mu_{M-1}+\mu_M,\\
	\sum_{i=M}^\infty \gamma'_i&\geqslant \mu_M.
\end{align*}

It means that in the group $D'\oplus {A'}^{\mathbf 1}$ there also exist independent direct summands of the orders
$$
p^{\ell_{r+1}-m_1},\dots, p^{\ell_{r+s}- m_s}, p^{\ell_{r+s+1}},\dots, p^{\ell_N}
$$
and we can choose them as potential $p^{m_1} b_{r+1}'=b_{r+1}'',\dots, p^{m_s}b_{r+s}'=b_{r+s}''$ and final  $b_{r+s+1}',\dots, b_N'$.

Now let us use the fact that a basic subgroup $B$ of $A$ is isomorphic to a basic subgroup of $A/(D\oplus A^{\mathbf 1})$. It is clear that the elements 
$$
b_1,\dots, b_r, b_{r+1},\dots, b_{r+s}
$$
are generators of independent cyclic summands of the basic subgroups of  $A/(D\oplus A^{\mathbf 1})$ of the orders $p^{\ell_1},\dots, p^{\ell_r},p^{m_1},\dots, p^{m_s}$, respectively.

As above, this knowelge gives us the following conditions to the cardinalities $\varkappa_1,\dots, \varkappa_i,\dots$: if the number of $p^k$ in the sequence $p^{\ell_1},\dots, p^{\ell_r},p^{m_1},\dots, p^{m_s}$ if $\nu_k$, $k=1,\dots, K$,  then  
$$
\varkappa_1\geqslant \nu_1,\ \varkappa_2\geqslant \nu_2,\dots, \varkappa_K\geqslant \nu_K,
$$
so
$$
\varkappa'_1\geqslant \nu_1,\ \varkappa'_2\geqslant \nu_2,\dots, \varkappa'_K\geqslant \nu_K
$$
and there exist independent $b_1',\dots, b_r', c_1',\dots, c_s'$ in a basic subgroup of~$A'$ of the zero heights and of the orders $p^{\ell_1},\dots, p^{\ell_r},p^{m_1},\dots, p^{m_s}$, respectively.

Let us take $c_1,\dots, c_s$ of nonzero heights such that 
$$
p^{m_1} c_1=b_{r+1}'',\ \dots,\ p^{m_s}c_s=b_{r+s}''
$$
(it is possible, because $b_{r+1}'',\dots, b_{r+s}''$ have infinite heights), and then 
$$
b_{r+1}': = c_1+c_1',\dots, b_{r+s}':= c_s+c_s'.
$$
Every $c_i+c_i'$ has a zero height as a sum of elements of zero and nonzero heights. Also $p^{m_i}(c_i+c_i')=p^{m_1} c_i + p^{m_1}c_i'=b_{r+i}'' + 0=b_{r+i}''$. 

Therefore now we have $b_1',\dots, b_N'\in A'$ with the same type as $b_1,\dots, b_N$, what was required.

Consequently our theorem is proved

\end{proof}

{\bf Acknowledgements.}
Our sincere thanks go to Alexey Miasnikov and Eugene Plotkin for very useful discussions regarding  this work and permanent attention to it.

\bigskip

\end{document}